\let\hat\widehat
\let\tilde\widetilde
\newcommand{\E}{\mbox{$\mathbb{E}$}}
\newcommand{\F}{\mbox{$\mathcal{F}$}}
\newcommand{\D}{\mbox{$\mathcal{D}$}}
\newcommand{\Tau}{\mathcal{T}}
\DeclareMathOperator*{\argmax}{argmax}
\newtheorem{fact}{Fact}
\newtheorem{theorem}{Theorem}
\newtheorem{definition}{Definition}
\newtheorem{lemma}[theorem]{Lemma}
\newtheorem{example}[theorem]{Example}
\newtheorem{corollary}[theorem]{Corollary}
\newtheorem{proposition}[theorem]{Proposition}
\newtheorem{remark}{Remark}
\title{Are sample means in multi-armed bandits \\ positively or negatively biased? }
\author{%
Jaehyeok Shin$^1$, Aaditya Ramdas$^{1,2}$ and Alessandro Rinaldo$^{1}$ \\
Department of Statistics and Data Science$^1$ \\
Machine Learning Department$^2$\\
Carnegie Mellon University \\
%Pittsburgh, PA 15213 \\
\texttt{\{shinjaehyeok, aramdas, arinaldo\}@cmu.edu}
% \AND
% Aaditya Ramdas \\
% Department of Statistics and Data Science \\
% Carnegie Mellon University \\
% Pittsburgh, PA 15213 \\
% \texttt{aramdas@stat.cmu.edu}\\
% \AND
% Alessandro Rinaldo \\
% Department of Statistics and Data Science \\
% Carnegie Mellon University \\
% Pittsburgh, PA 15213 \\
% \texttt{arinaldo@cmu.edu}
}
\begin{document}

\maketitle

\begin{abstract}
It is well known that in stochastic multi-armed bandits (MAB), the sample mean of an arm is typically not an unbiased estimator of its true mean. In this paper, we decouple three different sources of this selection bias: adaptive \emph{sampling} of arms, adaptive \emph{stopping} of the experiment, and adaptively \emph{choosing} which arm to study.  Through a new notion called ``optimism'' that captures certain natural monotonic behaviors of algorithms, we provide a clean and unified analysis of how optimistic rules affect the sign of the bias. The main takeaway message is that optimistic sampling induces a negative bias, but optimistic stopping and optimistic choosing both induce a positive bias. These results are derived in a general stochastic MAB setup that is entirely agnostic to the final aim of the experiment (regret minimization or best-arm identification or anything else). We provide examples of optimistic rules of each type, demonstrate that simulations confirm our theoretical predictions, and pose some natural but hard open problems.
\end{abstract}

\section{Introduction} 
Mean estimation is one of the most fundamental problems in statistics. In the classic nonadaptive setting, we observe a fixed number of samples drawn i.i.d. from a fixed distribution with an unknown mean $\mu$. In this case, we know that the sample mean is an unbiased estimator of $\mu$.
% (while also having several other favorable properies like consistency and minimax optimality).
% In particular, it is unbiased, consistent, and converges almost surely to $\mu$. 
% Under tail assumptions such as sub-Gaussian or sub-exponential conditions, the sample mean is tightly concentrated around $\mu$. Lastly, the sample mean has minimax optimal risk with respect to suitable loss functions such as the Kullback-Leibler (KL) loss for distributions in a natural exponential family.

However, in many cases the data are collected and analyzed in an adaptive  manner, a prototypical example being the stochastic multi-armed bandits (MAB) framework \citep{robbins1952some}. During the data collection stage, in each round an analyst can draw a sample from one among a finite set of available distributions (arms) based on the previously observed data (\emph{adaptive sampling}). The data collecting procedure can also be terminated based on a data-driven stopping rule rather than at a fixed time (\emph{adaptive stopping}). Further, the analyst can choose a specific target arm based on the collected data (\emph{adaptive choosing}), for example choosing to focus on the arm with the largest empirical mean at the stopping time. 
% Lastly, in hindsight, the analyst may wonder what the bias of the sample mean of the chosen arm was at some past time (\emph{adaptive rewinding}).
In this setting, the sample mean is no longer unbiased, due to the selection bias introduced by all three kinds of adaptivity. In this paper, we provide a comprehensive understanding of the sign of the bias, decoupling the effects of these three sources of adaptivity. 

In a general and unified MAB framework, we first define natural notions of monotonicity (a special case of which we call ``optimism'') of sampling, stopping and choosing rules. Under no assumptions on the distributions beyond assuming that their means exist, we show that optimistic sampling provably results in  a negative bias, but optimistic stopping and optimistic choosing both provably result in a positive bias. Thus, the net bias can be positive or negative in general. This message is in contrast to a recent thought-provoking work by \cite{nie2018adaptively} titled \emph{``Why adaptively collected data has a negative bias...''} that is unfortunately misleading for practitioners, since it only analyzed the bias of adaptive sampling for a fixed arm at a fixed time.

As a concrete example, consider an offline analysis of data that was collected by an MAB algorithm (with any aim). Suppose that a practitioner wants to estimate the mean reward of some of the better arms that were picked more frequently by the algorithm. \cite{nie2018adaptively} proved that the sample mean of each arm is negatively biased under fairly common adaptive sampling rules. Although this result is applicable only to a fixed arm at a fixed time, it could instill a possibly false sense of comfort with  sample mean estimates since the practitioner might possibly think that sample means are underestimating the effect size. However, we prove that if the algorithm was adaptively stopped and the arm index was adaptively picked, then the net bias can actually be positive. Indeed, we prove that this is the case for the lil'UCB algorithm (Corollary~\ref{cor::lil'UCB}), but it is likely true more generally as captured by our main theorem. Thus, the sample mean may actually overestimate the effect size. This is an important and general phenomenon for both theoreticians (to study further and quantify) and for practitioners (to pay heed to) because if a particular arm is later deployed in practice, it may yield a lower reward than was possibly expected from the offline analysis. 

\paragraph{Related work and our contributions.} 
Adaptive mean estimation, in each of the three senses described above, has received much attention in both recent and past literature. Below, we discuss how our work relates to past work, proceeding one notion at a time in approximate historical order.

We begin by noting that a single-armed bandit is simply a random walk, where adaptive stopping has been extensively studied.
% , since even simplest of asymptotic questions are often nontrivial\footnote{For example, if a random walk is stopped at an increasing sequence of stopping times, the corresponding sequence of stopped sample means does \emph{not} necessarily converge to the true sample mean, even in probability, without regularity conditions on the distribution and stopping rules (see Ch.1 of \citet{gut2009stopped}). In contrast, recall that the strong law of large numbers implies that without adaptive stopping, the sample mean converges almost surely to the true mean.}. 
The book by \cite{gut2009stopped} on stopped random walks is an excellent reference, summarizing almost 60 years of advances in sequential analysis. 
% Some relevant authors include \citet{anscombe1952large,richter1965limit,starr1966asymptotic,starr1972further,siegmund1978estimation}, since they discuss inferential questions for stopped random walks or stopped tests, often in parametric and asymptotic settings. 
Most of these extensive results on random walks have not been extended to the MAB setting, which naturally involves adaptive sampling and choosing. Of particular relevance is the paper by \cite{starr1968remarks} on the sign of the bias under adaptive stopping, whose work is subsumed by ours in two ways: we not only extend their insights to the MAB setting, but even for the one-armed setting, our results generalize theirs.
% Motivated by this, we provide new consistency results, bounds for bias and risk that hold in the fully adaptive setting.

Characterizing the sign of the bias of the sample mean under adaptive sampling has been a recent topic of interest due to a surge in practical applications.
While estimating MAB ad revenues, \citet{xu2013estimation} gave an informal argument of why the sample mean is \emph{negatively} biased for ``optimistic'' algorithms. Later, \citet{villar2015multi} encountered this negative bias in a simulation study motivated by using MAB for clinical trials. Most recently, \citet{bowden2017unbiased} derived an exact formula for the bias and \citet{nie2018adaptively} formally provided conditions under which the bias is negative. Our results on ``optimistic'' sampling inducing a negative bias generalize the corresponding results in these past works.

Most importantly, however, these past results hold only at a predetermined time and for a fixed arm. Here, we put forth a complementary viewpoint that ``optimistic'' stopping and choosing induces a \emph{positive} bias. Indeed, one of our central conceptual contributions is an appropriate and crisp definition of ``monotonicity'' and ``optimism'' (Definition~\ref{def::monotone_sampling_strategy}), that enables a clean and general analysis. 

Our main theoretical result, Theorem~\ref{thm::bias_sign}, allows the determination of the sign of the bias in several interesting settings. Importantly, the bias may be of any sign when optimistic sampling, stopping and choosing are all employed together. We demonstrate the practical validity of our theory using some simulations that yield interesting insights in their own right.

% {\color{red}{to be completed}}

% \begin{enumerate}
% \item We prove that the bias is negative at fixed times for ``optimistic'' sampling or choosing rules, but it is positive for fixed arms at ``optimistic'' stopping times under nonadaptive sampling. Thus, under all kinds of adaptivity the bias can take either sign, depending on whether their combined effect is ``monotone''  and ).
% \end{enumerate}

The rest of this paper is organized as follows. In Section \ref{sec::acda_setting}, we briefly formalize the three notions of adaptivity by introducing a stochastic MAB framework. Section~\ref{sec::sign-bias} derives results on when the bias can be positive or negative. In Section~\ref{sec::smulations}, we demonstrate the correctness of our theoretical predictions through simulations in a variety of practical situations. We end with a brief summary in Section~\ref{sec::summary}, and for reasons of space, we defer all proofs to the Appendix. 

\section{The stochastic MAB framework} \label{sec::acda_setting}

Let  $P_1, \dots, P_K$ be $K$ distributions of interest (also called arms)  with finite means $\mu_k =  \E_{Y\sim P_k}[Y]$. 
% For the simplicity of presentation, we assume each arm has a univariate distribution but every statement in this paper can be straightforwardly extended to the multivariate setting. 
Every inequality and equality between two random variables is understood in the almost sure sense.

\subsection{Formalizing the three notions of adaptivity}
For those not familiar with MAB algorithms, \citet{lattimore2018bandit} is a good reference. 
The following general problem setup is critical in the rest of the paper:
%\begin{itemize}
%	\item For each time $t$, choose $A_t \sim \mathrm{Multi}(1, \nu_t(k \mid A_1, Y_1, \dots, A_{t-1}, Y_{t-1})) \in \{0,1, \dots, K\}$.
%	\item If $A_t = k \in [K]$, draw a sample $Y_t$ from the selected distribution $P_{k}$ 
%	\item If $A_t = 0$, set termination time $T = t -1$ and return $D = \{A_1, Y_1, \dots, A_T, Y_T \}$. 
%\end{itemize}
\begin{itemize}
    \item Let $W_{-1}$ denote all external sources of randomness that are independent of everything else. Draw an initial random seed $W_0 \sim U[0,1]$, and set $t=1$.

    \item At time $t$, let $\D_{t-1}$ be the data we have so far, which is given by
	\[
	 \D_{t-1}  :=\{A_1, Y_1, \dots, A_{t-1}, Y_{t-1}\},
	\]
	 where $A_s$ is the (random) index of arm sampled at time $s$ and $Y_s$ is the observation from the arm $A_s$. Based on the previous data (and possibly an external source of randomness), let  
	$\nu_t(k \mid \D_{t-1})  \in [0,1]$ be the conditional probability of sampling the $k$-th arm for all $k \in [K] := \{1,\ldots,K\}$ with $\sum_{k=1}^{K}\nu_t(k \mid \D_{t-1}) =1$.
	 Different choices for $\nu_t$ capture commonly used methods such as random allocation, $\epsilon$-greedy \citep{sutton1998introduction}, upper confidence bound algorithms \citep{auer2002finite, audibert2009minimax, garivier2011kl,kalyanakrishnan2012pac, jamieson_lil_2014} and Thompson sampling \citep{thompson1933likelihood, agrawal2012analysis,kaufmann2012thompson}. 
% 	\item At time $t$, for all $k \in [K]$, prescribe the probability 
% 	$\nu_t(k \mid \D_{t-1})  \in [0,1]$ of selecting the $k$-th arm 
% 	 based on the observed data (and possibly external randomness), where 
% 	 \[
% 	 \D_{t-1}  :=\{A_1, Y_1, \dots, A_{t-1}, Y_{t-1}\}. 
% 	 \]
% 	 Different choices for $\nu_t$ capture commonly used sampling methods including completely random allocation, $\epsilon$-greedy, upper confidence bound algorithms and Thompson sampling. 
	\item 
	If
% 		\textcolor{red}{$W_{t-1} \in \left(\sum_{j = 1}^{k-1} \nu_t(j), \sum_{j = 1}^{k} \nu_t(j)\right) \text{ for some $k \in [K]$,}$}
	$W_{t-1} \in \left(\sum_{j = 1}^{k-1} \nu_t(j\mid \D_{t-1}), \sum_{j = 1}^{k} \nu_t(j\mid \D_{t-1})\right) \text{ for some $k \in [K]$,}$
   then set $A_t = k$ which is equivalent to sample $A_t$ from a multinomial distribution with probabilities $\{\nu_t(k\mid\D_{t-1})\}_{k=1}^{K}$. Let $Y_t$ be a fresh independent draw from distribution $P_{k}$. This yields a natural filtration $\left\{ \mathcal{F}_t \right\}$ which is defined, starting with $\mathcal{F}_0 = \sigma\left(W_{-1},W_0\right)$, as
\[
\mathcal{F}_t := \sigma\left(W_{-1},W_0, Y_1, W_1, \dots,Y_t, W_t \right),~~\forall t \geq 1.
\]
Then, $\{Y_t\}$ is adapted to $\left\{ \mathcal{F}_t \right\}$, and $\{A_t\}, \{\nu_t\}$ are predictable with respect to $\left\{ \mathcal{F}_t \right\}$.
\item For each $k\in[K]$ and $t\geq 1$, define the running sum  and number of draws for arm $k$ as
$S_k(t) := \sum_{s=1}^{t} \mathbbm{1}(A_s = k) Y_s, ~~ N_k(t) := \sum_{s=1}^{t} \mathbbm{1}(A_s = k).$ Assuming that arm $k$ is sampled at least once, we define the  sample mean for arm $k$ as
\[
\hat{\mu}_k(t) := \frac{S_k(t)}{N_k(t)}.
\]
Then, $\{S_t\}$, $\{\hat{\mu}_k(t)\}$ are adapted to $\{\F_t\}$ and $\{N_k(t)\}$ is predictable with respect to $\{\F_{t}\}$.
	\item Let $\Tau$ be a stopping time with respect to $\{\F_t\}$. If $\Tau$ is nonadaptively chosen, it is denoted $T$.
	If $t < \Tau$, draw a random seed $W_{t} \sim U[0,1]$ for the next round, and increment $t$. Else return the collected data $\mathcal{D}_\Tau = \{A_1, Y_1, \dots, A_\Tau, Y_\Tau\} \in \F_\Tau$.
	\item After stopping, choose a data-dependent arm based on a possibly randomized rule $\kappa : \mathcal{D}_\Tau \cup \{W_{-1}\} \mapsto [K]$, but we denote the index $\kappa(\mathcal{D}_\Tau\cup \{W_{-1}\})$ as just $\kappa$ for short, so that the target of estimation is $\mu_\kappa$.  Note that $\kappa \in \F_\Tau$, but when $\kappa$ is nonadaptively chosen (is independent of $\F_\Tau$), we called it a fixed arm and denote it as $k$.
% 	\item We may adaptively rewind the clock to focus a previous time $\tau \leq \Tau$, if we wish to characterize the past behavior of a chosen sample mean $\hat{\mu}_\kappa(\tau)$. Note that $\tau$ is not a stopping time in general. If we do not adaptively rewind, then it corresponds to choosing $\tau=\Tau$.
\end{itemize}

% Depending on the choice of the selecting probability function $\nu_t$, the adaptive sampling strategy described above can be reduced to one of the 

% Throughout the paper, we say \emph{a data is collected under the adaptive sampling} if the data is collected from sub-$\psi$ distributions with an adaptive sampling strategy and a fixed time horizon $T$. If  a random stopping time $\Tau$ is used, we say \emph{a data is collected under the adaptive sampling and stopping}.  

% For each $k \in [K]$ and $t \in [\Tau]$, let $S_k(t)$ be the sum of random samples and $N_k(t)$ be the number of draws from distribution $P_k$ up to time $t$:

% Note that for each $k \in [K]$, the ``sum'' process $\{S_k(t)\}$ and the ``number of draws'' process $\{N_k(t)\}$ are respectively adapted and predictable with respect to the filtration $\{\mathcal{F}_t\}_{t \geq0}$. 
% %$S_k(t) \in \mathcal{F}_t$ and $N_k(t) \in \mathcal{F}_{t-1}$,  $\forall t \geq 1$.
% The sample mean estimator for the chosen arm is then defined as
% \begin{equation}
% \hat{\mu}_\kappa(\Tau) := \frac{S_\kappa(\Tau)}{N_\kappa(\Tau)}.
% \end{equation}
% Note that the sample mean can be defined only if $N_\kappa(\Tau) > 0$. Therefore,  we assume that the number of draws at the stopping time $N_\kappa(\Tau)$ is bounded away from $0$ throughout this paper.

 The phrase ``fully adaptive setting'' refers to the scenario of running an adaptive sampling algorithm until an adaptive stopping time $\Tau$, and asking about the sample mean of an adaptively chosen arm $\kappa$. When we are not in the fully adaptive setting, we explicitly mention what aspects are adaptive. 

\subsection{The tabular perspective on stochastic MABs} \label{sec::counterfactual_setting}
It will be useful to imagine the above fully adaptive MAB experiment using a $\mathbb{N} \times K$ table, $X^*_\infty$, whose rows index time and columns index arms. Here, we put an asterisk to clarify that it is counterfactual and not necessarily observable. We imagine this entire table to be populated even before the MAB experiments starts, where for every $i\in\mathbb{N},k\in[K]$, the $(i,k)$-th entry of the table contains an independent draw from $P_k$ called $X^*_{i,k}$. At each step, our observation $Y_t$ corresponds to the element $X^*_{N_k(t),A_t}$. Finally, we denote $\D^*_\infty =X^*_{\infty} \cup \{W_{-1}, W_0,\dots,W_t,\dots\}$.

Given the above tabular MAB setup (which is statistically indistinguishable from the setup described in the previous subsection), one may then find deterministic functions $f_{t,k}$ and $f^*_k$ such that
% $N_k(t) = \sum_{s \leq t} f^*_{k,s}(D^*_{s-1})$.
\begin{align}\label{eq::counterfactual-numberdraws}
N_k(\Tau) = \sum_{t\geq 1} \underbrace{\mathbbm{1}
\left(A_t = k \right)\mathbbm{1}(\Tau \geq t)}_{\F_{t-1}\text{-measurable}} = \sum_{t\geq 1} f_{t,k}(\D_{t-1})
\equiv  f^*_k (\D^*_\infty).
\end{align}
Specifically, the function $f_{t,k}(\cdot)$ evaluates to one if and only if we do not stop at time $t-1$, and pull arm $k$ at time $t$. Indeed, given $\D^*_\infty$, the stopping time $\Tau$ is deterministic and so is the number of times $N_k(\Tau)$ that a fixed arm $k$ is pulled, and this is what $f^*_k$ captures. Along the same lines, the number of draws from a chosen arm $\kappa$ at stopping time $\Tau$ can be written in terms of the tabular data as
\begin{equation}\label{eq:counterfactual-Nkappatau}
	N_\kappa(\Tau) ~=~ \sum_{k =1}^K \mathbbm{1}\left(\kappa = k\right) N_k(\Tau) ~\equiv~ \sum_{k =1}^k g^*_k  (\D^*_\infty) f^*_k (\D^*_\infty)
	\end{equation}
	for some deterministic set of functions $\{g_{k}^*\}$. Indeed, $g^*_k$ evaluates to one if after stopping, we choose arm $k$, which is a fully deterministic choice given $\D^*_\infty$.

% {\color{red}[Define $N_{\kappa}$ also here? Interpret and explain.]}

\section{The sign of the bias under adaptive sampling, stopping and choosing}\label{sec::sign-bias}

% Also, on the event $\Tau = \infty$, $\hat{\mu}_k(\Tau)$ and $N_k(\Tau)$ are understood as their almost sure pointwise limit which are well-defined by the strong law of large numbers and the monotonicity of $t \mapsto N_k(t)$. 
%From now, For the notationally simplicity, we will drop the dependency on $T$ for all related random variables if it is clear in the context.
%  Here, we provide characterizations of the sign of the bias when adaptive sampling, stopping and choosing are combined. 
 
\subsection{Examples of positive  bias due to ``optimistic'' stopping or choosing} \label{subSec::optimistic_rules}

%Under the nonadaptive setting where data is collected independently from a identical distribution, the sample mean is an unbiased estimator of the population mean.  Due to adaptivity in the data collecting procedure, however, the sample mean is no longer unbiased under the adaptive sampling \citep{xu2013estimation,nie2018adaptively}.  

In MAB problems, collecting higher rewards is a common objective of adaptive sampling strategies, and hence they are often designed to sample more frequently from a distribution which has larger sample mean than the others.  \citet{nie2018adaptively} proved that the bias of the sample mean for any \emph{fixed} arm and at any \emph{fixed} time is negative when the sampling strategy satisfies two conditions called ``Exploit'' and ``Independence of Irrelevant Options'' (IIO).  However, the emphasis on \emph{fixed} is important: their conditions are not enough to determine the sign of the bias under adaptive stopping or choosing, even in the simple nonadaptive sampling setting. Before formally defining our crucial notions of ``optimism'' in the next subsection, it is instructive to look at some examples.
\begin{example}\label{eg::bias-positive}
Suppose we continuously alternate between drawing a sample from each of two Bernoulli distributions with mean parameters $\mu_1, \mu_2 \in (0,1)$. This sampling strategy is fully deterministic, and thus it satisfies the Exploit and IIO conditions in \citet{nie2018adaptively}. For any fixed time $t$, the bias  equals zero for both sample means. Define a stopping time $\Tau$ as the first time we observe $+1$ from the first arm.  Then the sample size of the first arm, $N_1(\Tau)$, follows a geometric distribution with parameter $\mu_1$, which implies that the bias of $\hat{\mu}_1(\Tau)$ is
\begin{align*}
\mathbb{E}\left[\hat{\mu}_1 (\Tau) - \mu_1 \right]= \mathbb{E}\left[\frac{1}{N_1(\Tau)}\right] - \mu_1 = \frac{\mu_1 \log(1/ \mu_1)}{1-\mu_1} - \mu_1,
\end{align*}
which is positive for all $\mu_1 \in (0,1)$. 
\end{example}
This example shows that for nonadaptive sampling, adaptive stopping can induce a \emph{positive} bias. In fact, this example is not atypical, but is an instance of a more general phenomenon explored in the one-armed setting in sequential analysis.  For example, \citet[Ch. 3]{siegmund1978estimation} contains the following classical result for a Brownian motion $W(t)$ with positive drift $\mu > 0$.
\begin{example}\label{eg::brownian-stopping}
If we define a stopping time as the first time $W(t)$ exceeds a line with slope $\eta$ and intercept $b>0$, that is $\Tau_B :=\inf\{t\geq 0: W(t) \geq \eta t + b \}$, then for any slope $\eta \leq \mu$, we have $\mathbb{E}\left[\frac{W(\Tau_B)}{\Tau_B} - \mu\right] = 1/b$. Note that a sum of Gaussians with mean $\mu$ behaves like a time-discretization of a Brownian motion with drift $\mu$; since $\mathbb{E}W(t) = t\mu$, we may interpret $W(\Tau_B)/\Tau_B$ as a stopped sample mean, and the last equation implies that its bias is $1/b$, which is positive.
\end{example}
 Generalizing further, \citet{starr1968remarks} proved the following remarkable result. 
\begin{example}\label{eg::starr-woodroofe-stopping}
If we stop when the sample mean crosses any predetermined upper boundary, the stopped sample mean is always positive biased (whenever the stopping time is a.s. finite). Explicitly, choosing any arbitrary sequence of real-valued constants $\{c_k\}$, define $\Tau_c := \inf\{t: \hat{\mu}_1(t) > c_t \}$, then as long as the observations $X_i$ have a finite mean and $\Tau_c$ is a.s. finite, we have $\mathbb{E}\left[\hat{\mu}_1 (\Tau_c) - \mu_1 \right] - \mu_1 > 0$. 
\end{example}
Surprisingly, we will generalize the above strong result even further. Additionally, stopping times in the MAB literature can be thought of as extensions of $\Tau_c$ and $\Tau_B$ to a setting with multiple arms, and we will prove that indeed the bias induced will still be positive.
% instead of stopping when an arm's empirical mean crosses a time-dependent threshold, MAB algorithms often stop when the difference between the empirical means of two arms (or more pairs) crosses some time-dependent threshold. 
We end with an example of the positive bias induced by ``optimistic'' choosing: 
\begin{example}\label{eg::argmax}
Given $K$ standard normals $\{Z_i\}$ (to be thought of as one sample from each of $K$ arms), let $\kappa = \argmax_{k} Z_k$, that is, we choose the arm with the largest observation. It is well known that $\mathbb{E}\left[Z_{\kappa}\right] = \mathbb{E}\left[\max_{k \in [K]} Z_k\right] \asymp \sqrt{2\log K}$. Since $\E{Z_k}=0$ for all $k$, but $\E{Z_\kappa}>0$, the ``optimistic'' choice $\kappa$ induces a positive bias.
\end{example}

% \begin{example}\label{eg::argmax}
% Given $K$ standard normals $\{Z_i\}$ (to be thought of as one sample from each of $K$ arms), we know that $\mathbb{E}\left[\max_{k \in [K]} Z_k\right] \asymp \sqrt{2\log K}$. Since $\E{Z_k}=0$ for all $k$, but $\E{Z_\kappa}>0$ for $\kappa = \argmax_{k} Z_k$, \textcolor{red}{explain kappa in english} optimistic choosing induces a positive bias.
% \end{example}

In many typical MAB settings, we should expect sample means to have two contradictory sources of bias: negative bias from ``optimistic  sampling'' and positive bias from ``optimistic  stopping/choosing''.

\subsection{Positive or negative bias under monotonic sampling, stopping and choosing}

% Since the adaptive sampling, stopping and choosing act as different directional sources of bias, we need to take all into account in order to characterize the bias of the sample mean.

% Recall from Section~\ref{sec::counterfactual_setting} that the number of draws from the chosen arm $\kappa$ at stopping time $\Tau$ can be written in terms of the counterfactual data as
% \begin{equation}
% 	N_\kappa(\Tau) =\sum_{k =1}^K \mathbbm{1}\left(\kappa = k\right) N_k(\Tau) := \sum_{k =1}^k g^*_k  (\D^*_\infty) f^*_k (\D^*_\infty).
% 	\end{equation}
% 	for some deterministic  functions $\{f_{k}^*\}$ and $\{g_{k}^*\}$ where $\D_{\infty}^* $ is the counterfactual data.  
	
Based on the expression \eqref{eq:counterfactual-Nkappatau}, we formally state a characteristic of data collecting strategies which fully determines the sign of the bias as follows.

\begin{definition} \label{def::monotone_sampling_strategy}
A data collecting strategy is ``monotonically increasing (or decreasing)'' if for any $i \in \mathbb{N}$ and $k \in [K]$, the function  $\D^*_\infty \mapsto  g^*_k  (\D^*_\infty) / f^*_k (\D^*_\infty) \equiv \mathbbm{1}\left(\kappa = k\right) / N_k(\Tau)$, is an increasing (or decreasing) function of $X_{i,k}^*$  while keeping all other entries in $\D^*_\infty$ fixed. Further, we say that
\begin{itemize}
    \item a data collecting strategy has an optimistic sampling rule if the function  $\D^*_\infty \mapsto  N_k(t)$ is an increasing function of $X_{i,k}^*$  while keeping all other entries in $\D^*_\infty$ fixed for any fixed $i \in \mathbb{N}$, $t \geq 1$ and $k \in [K]$;
    \item  a data collecting strategy has an optimistic stopping rule if $\D^*_\infty \mapsto  \Tau$ is a decreasing function of $X_{i,k}^*$ while keeping all other entries in $\D^*_\infty$ fixed for any fixed $i \in \mathbb{N}$ and $k \in [K]$;
    \item  a data collecting strategy has an optimistic choosing rule if $\D^*_\infty \mapsto \mathbbm{1}(\kappa =k)$ is an increasing function of $X_{i,k}^*$ while keeping all other entries in $\D^*_\infty$ fixed for any fixed $i \in \mathbb{N}$ and $k \in [K]$.
\end{itemize}
\end{definition}

Note that if a data collecting strategy has an optimistic sampling (or stopping or choosing) rule, with the other components being nonadaptive, then the strategy is monotonically decreasing (increasing).
We remark that nonadaptive just means independent of the entries $X^*_{i,k}$, but it is not necessarily deterministic\footnote{An example of a random but nonadaptive stopping rule: flip a (potentially biased) coin at each step to decide whether to stop. 
An example of a random but nonadaptive sampling rule: with probability half pick a uniformly random arm, and with probability half pick the arm that has been sampled most often thus far.
}. 
The above definition warrants some discussion to provide intuition.

% Roughly speaking, the optimistic stopping condition requires that if a sample from the $k$-th distribution was increased while keeping all other values fixed, then the data collecting strategy would draw a smaller number of samples from the $k$-th distribution up to the (altered) stopping time. 
Roughly speaking, under optimistic stopping, if a sample from the $k$-th distribution was increased while keeping all other values fixed, the algorithm would reach its termination criterion sooner.
For instance,  $\Tau_B$ from Example~\ref{eg::brownian-stopping} and the criterion in Example~\ref{eg::bias-positive} are both optimistic stopping rules. Most importantly, boundary-crossing is optimistic:
\begin{fact}\label{fact:starr}
The general boundary-crossing stopping rule of \citet{starr1968remarks}, denoted $\Tau_c$ in Example~\ref{eg::starr-woodroofe-stopping}, is an optimistic stopping rule (and hence optimistic stopping is a weaker condition).
\end{fact}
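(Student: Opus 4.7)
The plan is to unpack Definition 1 directly: I need to show that the map $\D^*_\infty \mapsto \Tau_c$ is a (weakly) decreasing function of each entry $X_{i,k}^*$ when all other entries are held fixed. Since the Starr--Woodroofe rule lives in the single-armed setting, I would drop the arm subscript and regard $\Tau_c$ as a deterministic function of an i.i.d.\ sequence $X_1, X_2, \dots$, with sample mean $\hat{\mu}_1(t) = t^{-1}\sum_{s \leq t} X_s$ compared against the predetermined constants $\{c_t\}$.

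Fix an index $i$ and compare the original sequence against a perturbation in which $X_i$ is replaced by $X_i + \delta$ for some $\delta > 0$, everything else unchanged; let $\Tau, \Tau'$ and $\hat{\mu}(t), \hat{\mu}'(t)$ denote the corresponding stopping times and sample means. The argument then reduces to a short two-case split. If $\Tau < i$, then the first $\Tau$ entries of the two sequences coincide, so $\hat{\mu}'(t) = \hat{\mu}(t)$ for all $t \leq \Tau$ and trivially $\Tau' = \Tau$. If $\Tau \geq i$, then $\hat{\mu}'(t) = \hat{\mu}(t) \leq c_t$ for $t < i$ (so no earlier crossing occurs in the new sequence either), whereas $\hat{\mu}'(t) = \hat{\mu}(t) + \delta/t \geq \hat{\mu}(t)$ for every $t \geq i$; in particular $\hat{\mu}'(\Tau) \geq \hat{\mu}(\Tau) > c_\Tau$, so the perturbed process crosses the boundary no later than time $\Tau$, giving $\Tau' \leq \Tau$. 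Either way, $\Tau' \leq \Tau$, which is exactly the monotonicity required by the optimistic-stopping clause of Definition~\ref{def::monotone_sampling_strategy}.

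The parenthetical ``(and hence optimistic stopping is a weaker condition)'' follows with essentially no extra work: $\Tau_c$ uses only predetermined real constants $\{c_t\}$ in a single-armed setting, whereas the optimistic stopping class admits arbitrary multi-armed, possibly data-dependent termination rules subject only to monotonicity in each entry of $\D^*_\infty$. Thus every $\Tau_c$ is optimistic but the converse fails, so the class is strictly larger.

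I do not expect any real obstacle here; the proof is an inspection once the tabular framing of Section~\ref{sec::counterfactual_setting} is taken seriously. The only point that demands a little care is the case $\Tau \geq i$, where I must verify that the perturbation cannot delay the boundary-crossing — and this is precisely what the pointwise inequality $\hat{\mu}'(t) \geq \hat{\mu}(t)$ for $t \geq i$ guarantees.
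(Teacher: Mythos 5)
Your proof is correct and follows the same reasoning the paper intends: the paper gives no separate formal proof of Fact~\ref{fact:starr}, treating it as immediate from the observation that $\hat{\mu}_1(t)$ is coordinatewise nondecreasing in the observations (it even notes the generalization $\Tau_\ell$ with coordinatewise nondecreasing $\ell_t$), and your two-case argument (perturbed index after vs.\ at-or-before $\Tau$) is exactly the verification of that monotonicity for $\Tau_c$ in the sense of Definition~\ref{def::monotone_sampling_strategy}. No gaps; the parenthetical strictness claim is also adequately justified by the paper's own examples of optimistic rules not of boundary-crossing-of-the-sample-mean form.
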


Optimistic stopping rules do not need to be based on the sample mean; for example, if $\{c_t\}$ is an arbitrary sequence, then $\Tau := \inf\{t \geq 3: X_t + X_{t-2} \geq c_t \}$ is an optimistic stopping rule. In fact, $\Tau_\ell := \inf\{t \geq 3: \ell_t(X_{1},\dots,X_t) \geq c_t \}$ is optimistic, as long as each $\ell_t$ is coordinatewise nondecreasing.

For optimistic choosing, the previously discussed argmax rule (Example~\ref{eg::argmax}) is  optimistic. More generally, it is easy to verify the following: 
\begin{fact}\label{fact:choice}
For any probabilities $p_1\geq p_2\dots \geq p_K$ that sum to one, a rule that chooses the arm with the $k$-th largest empirical mean with probability $p_k$, is an optimistic choosing rule. 
\end{fact}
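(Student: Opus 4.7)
My plan is to construct an explicit $W_{-1}$-driven implementation of the rule and verify that the resulting indicator is monotone in the required sense. Using a uniform $V \in [0,1]$ extracted from $W_{-1}$, and computing each arm's rank $r_k$ in decreasing order of $\hat\mu_k$ (ties broken by arm index), I would assign---ordering arms by their \emph{index} $1, 2, \dots, K$ rather than by rank---the half-open interval $I_k := [\sum_{j<k} p_{r_j},\ \sum_{j\le k} p_{r_j}) \subseteq [0,1)$ of length $p_{r_k}$ to arm $k$, and set $\kappa = k$ iff $V \in I_k$. Since $\{I_k\}$ partitions $[0,1)$, this implements the rule with the correct conditional probability $P(\kappa = k \mid \mathcal{D}_\Tau) = p_{r_k}$. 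Note that the naive assignment (which gives rank-$j$ the interval $[\sum_{l<j}p_l,\sum_{l\le j}p_l)$) is \emph{not} optimistic, so the index-based ordering is what does the work.

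The core step is to show $I_k' \supseteq I_k$ whenever $\hat\mu_k$ increases while the other $\hat\mu_j$ stay fixed (primes denote post-change quantities). Let $A = \{j \neq k : r_k' \le r_j \le r_k - 1\}$ be the set of arms whose ranks each shift one step worse in the process. The permutation structure of ranks forces $\{r_j + 1 : j \in A\} = \{r_k' + 1, \dots, r_k\}$ to be a consecutive range, yielding the telescoping identity $\sum_{j \in A}(p_{r_j+1} - p_{r_j}) = p_{r_k} - p_{r_k'} \le 0$. Splitting this sum by index ($j < k$ vs.\ $j > k$) and using it to cancel cross-terms in the endpoint differences, a short calculation gives $L_k' - L_k = \sum_{j \in A,\ j<k}(p_{r_j+1}-p_{r_j}) \le 0$ and $R_k' - R_k = \sum_{j \in A,\ j>k}(p_{r_j}-p_{r_j+1}) \ge 0$, each a sum of terms of the indicated sign by $p_1 \ge p_2 \ge \dots \ge p_K$. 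Hence $I_k' \supseteq I_k$, and so $\mathbbm{1}(V \in I_k) = \mathbbm{1}(\kappa = k)$ is monotone nondecreasing in $\hat\mu_k$.

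The main obstacle is the endpoint bookkeeping: arms with index below $k$ affect $L_k$, arms with index above $k$ affect $R_k$, and it is the telescoping identity across a consecutive range of ranks that makes both endpoints move outward, enlarging $I_k$. A secondary subtlety is translating $\hat\mu_k$-monotonicity into $X^*_{i,k}$-monotonicity as Definition~\ref{def::monotone_sampling_strategy} requires: this is automatic whenever $\hat\mu_k$ is a weakly increasing function of $X^*_{i,k}$, e.g.\ under nonadaptive sampling and stopping where $N_k(\Tau)$ is a deterministic constant. In the fully adaptive case, the ``optimistic choosing rule'' tag is best read as a property of the choosing function alone, combining with the optimistic sampling and stopping clauses of Definition~\ref{def::monotone_sampling_strategy} to deliver the strategy-level monotonicity.
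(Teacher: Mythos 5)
Your proof is correct, but there is no paper proof to compare it against: the paper states Fact~\ref{fact:choice} with only the remark that it is ``easy to verify'' and never proves it in the appendix. Your argument genuinely fills that gap, and it is more substantive than the paper's phrasing suggests. Since the rule is randomized, optimism in the sense of Definition~\ref{def::monotone_sampling_strategy} is a property of a concrete realization of $\kappa$ as a deterministic function of $\D_\Tau$ and the external seed, so some coupling must be exhibited; your observation that the obvious rank-ordered coupling (rank $j$ assigned $[\sum_{l<j}p_l,\sum_{l\le j}p_l)$) is \emph{not} monotone, while the index-ordered coupling is, is exactly the point that needs checking. The core computation is right: when $\hat\mu_k$ increases, the arms in $A=\{j\neq k:\ r_k'\le r_j\le r_k-1\}$ each drop one rank and no other rank changes, the telescoping identity $\sum_{j\in A}(p_{r_j+1}-p_{r_j})=p_{r_k}-p_{r_k'}$ holds, and the endpoint formulas $L_k'-L_k=\sum_{j\in A,\,j<k}(p_{r_j+1}-p_{r_j})\le 0$ and $R_k'-R_k=\sum_{j\in A,\,j>k}(p_{r_j}-p_{r_j+1})\ge 0$ follow from $p_1\ge\dots\ge p_K$, giving $I_k'\supseteq I_k$ and hence monotonicity of $\mathbbm{1}(\kappa=k)$ in $\hat\mu_k$; the argument is also robust to index-based tie-breaking, since the relative order of the arms other than $k$ never changes.

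Two small wording caveats. First, nonadaptive sampling/stopping does not make $N_k(\Tau)$ a deterministic constant (see the paper's footnote on random but nonadaptive rules); what your argument needs, and what does hold, is only that $N_k(\Tau)$ and the other arms' empirical means are unchanged when $X^*_{i,k}$ is varied with the seeds and all other table entries held fixed. Second, your closing remark that the optimistic sampling, stopping and choosing clauses ``combine to deliver strategy-level monotonicity'' is not right as stated: optimistic sampling is monotone in the opposite (decreasing) direction, which is precisely why the fully adaptive case is delicate (cf.\ Corollary~\ref{cor::lil'UCB}). Fact~\ref{fact:choice} is invoked in the paper with the other components nonadaptive, which is the reading your main argument correctly adopts.
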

Turning to the intuition for optimistic sampling, if a sample from the $k$-th distribution was increased while keeping all other values fixed, the algorithm would sample the $k$-th arm more often. We claim that optimistic sampling is a weaker condition than the Exploit and IIO conditions employed by \cite{nie2018adaptively}. 
% \\ JH: we may need to weaken this argument since the proof of claim is now the part of the proof of theorem 1 in \cite{nie2018adaptively}. Thus I used the "fact" environment instead of claim.}

\begin{fact}\label{fact:sampling}
\label{claim::exploit_imply_optimistic}
 The ``Exploit'' and ``IIO'' conditions in \citet{nie2018adaptively} together imply that the sampling strategy is optimistic (and hence optimistic sampling is a weaker condition). Further, as summarized in Appendix~\ref{Appen::sampling}, $\epsilon$-greedy, UCB and Thompson sampling (Gaussian-Gaussian and Beta-Bernoulli, for instance) are all optimistic sampling methods. 
\end{fact}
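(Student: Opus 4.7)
The plan is to split Fact~\ref{fact:sampling} into two independent pieces: (i) showing that the ``Exploit'' and ``IIO'' conditions of \citet{nie2018adaptively} jointly force the sampling rule to be optimistic in our sense, and (ii) verifying optimism directly (rather than via Exploit+IIO) for each of $\epsilon$-greedy, UCB, and Thompson sampling. For (i), I would couple the original and perturbed systems through their shared external randomness $\{W_t\}$. Fix $(i,k)$ and two tables $X^*,\tilde X^*$ that agree everywhere except at $(i,k)$, where $\tilde X^*_{i,k}\geq X^*_{i,k}$. Up to and including the $i$-th time arm $k$ is pulled in the original system, the two realizations are identical. Beyond that point I would proceed by induction on $t$, maintaining the invariant $\tilde N_k(t)\geq N_k(t)$. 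In the inductive step, IIO lets me rewrite the conditional probability $\nu_{t+1}(k\mid \cdot)$ so that it depends on the other arms' data only through their counts, while Exploit gives coordinatewise monotonicity in each past arm-$k$ observation. Assembling these, and relabeling so arm $k$'s subinterval sits at the left end when reading $W_t$ (a valid WLOG step since $W_t$ is uniform), yields the pointwise domination needed to push the invariant forward one step.

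For (ii), I would verify optimism directly, which is cleanest algorithm by algorithm. For $\epsilon$-greedy, conditional on the explore/exploit coin the exploration arm is independent of $X^*$, and the exploitation choice is $\argmax_k \hat\mu_k(t)$, which is manifestly monotone in each $X^*_{i,k}$. For UCB, the chosen arm maximizes $\hat\mu_k(t)+U_k(N_k(t))$; the bonus term depends only on counts, so perturbing $X^*_{i,k}$ upward can only raise arm $k$'s index while leaving the others' indices unchanged, giving optimism by a direct $\argmax$ comparison. For Thompson sampling (Gaussian--Gaussian and Beta--Bernoulli), the posterior of arm $k$ is stochastically increasing in each of its own observations; coupling the per-arm posterior draws through inverse-CDF transforms of fixed uniform seeds makes the arm-$k$ draw monotone and leaves the others unchanged. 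In each case I would then run the same induction as in (i) to conclude that $N_k(t)$ is increasing in $X^*_{i,k}$.

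The hard part is the inductive step in (i). After the $i$-th pull of arm $k$ the two trajectories genuinely diverge: subsequent arm-$k$ observations correspond to different rows of $X^*$, and the arms $j\neq k$ accumulate different outcomes at different rates. Neither Exploit on its own (which compares histories differing in a single arm-$k$ entry) nor IIO on its own (invariance under reshuffling of outcomes from other arms) directly compares the one-step sampling probabilities along such divergent paths. The delicate bookkeeping is to use the inductive hypothesis $\tilde N_k(t)\geq N_k(t)$ to reduce the comparison to a chain of single-coordinate changes that Exploit and IIO can handle in sequence. This, together with justifying the leftmost-interval relabeling under the shared $W_t$, is where I expect the technical weight of the proof to lie.
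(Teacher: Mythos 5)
There is a genuine gap in the inductive step of part (i), which you yourself flag as carrying the technical weight but do not resolve, and the tool you propose for it rests on a misreading of IIO. IIO does \emph{not} let you ``rewrite $\nu_{t+1}(k\mid\cdot)$ so that it depends on the other arms' data only through their counts''; it says only that \emph{when arm $k$ is not pulled}, the identity of the arm that is pulled is a function of the other arms' data alone. Consequently, once the two coupled trajectories diverge, neither Exploit (which compares two histories with \emph{equal} arm-$k$ counts and \emph{identical} data from the other arms, differing in a single arm-$k$ value) nor your reading of IIO gives you the one-step comparison, and the proposed ``chain of single-coordinate changes'' is not available: along divergent paths the other arms have different counts, so no intermediate history satisfies Exploit's hypotheses. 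The missing ingredient is exactly the paper's Lemma~\ref{lemma::cor_of_IIO}: if the two tables agree off column $k$, then under IIO, $N_k(t)\le N_k'(t)$ forces $N_j(t)\ge N_j'(t)$ for all $j\neq k$, and in particular equality of the arm-$k$ counts forces equality of \emph{all} counts, hence (since the other columns are shared) literally identical other-arm data. With that lemma, your induction closes: the case $\tilde N_k(t)>N_k(t)$ is trivial (the invariant cannot be violated in one step), and in the only nontrivial case $\tilde N_k(t)=N_k(t)$ the two histories differ in at most the single entry $X^*_{i,k}$, so one application of Exploit gives $A_{t+1}=k\Rightarrow \tilde A_{t+1}=k$. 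This is, up to packaging, the paper's proof, which runs the same comparison by contradiction at the first time $s_0$ where $N_k(s_0)>N_k'(s_0)$.

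Two further cautions. First, the ``WLOG relabel arm $k$'s subinterval to the left end of $W_t$'' step is not a harmless relabeling: optimism is a pointwise property of the realized trajectory with the seeds $\{W_t\}$ held fixed, and permuting subintervals changes the realized arm for a fixed $W_t$. The paper sidesteps this because Exploit/IIO are stated for the deterministic maps with seeds included, and for randomized rules (Thompson, $\epsilon$-greedy) it instead exhibits a specific, statistically equivalent implementation that is monotone (Appendix~\ref{Appen::subSec::description}, Corollary~\ref{cor::sufficient_TS}); your inverse-CDF couplings are in that spirit and are fine if stated as choosing such an implementation. Second, your ``direct argmax comparison'' for UCB proves only a one-step statement at histories that coincide; establishing that $N_k(t)$ is monotone in $X^*_{i,k}$ over the whole trajectory still needs the same induction, hence the same IIO-type control of the other arms, so part (ii) does not escape the gap in part (i).
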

For completeness, we prove the first part formally in Appendix~\ref{Appen::subSec::proof_claim}, which builds heavily on observations already made in the proof of Theorem~1 in \citet{nie2018adaptively}. 
Beyond the instances mentioned above, Corollary~\ref{cor::sufficient_TS} in the supplement captures a sufficient condition for Thompson sampling with one-dimensional exponential families and conjugate priors to be optimistic.
We now provide an expression for the bias that holds at any stopping time and for any sampling algorithm.

% \textcolor{red}{Additionally, if the adaptive sampling and stopping rules satisfy the  monotonically increasing (or decreasing) condition, we show that that the covariance between $\hat{\mu}_k(\Tau)$ and $N_k(\Tau)$ is always non-negative (or non-positive) and that the sample mean is negatively (or positively) biased.}
% \textcolor{red}{
% \begin{theorem} \label{thm::bias_expression_and_sign}
% 	For each fixed $k \in [K]$, let $\Tau$ be a stopping time with respect to the natural filtration $\{\mathcal{F}_t\}$. If $0 < \mathbb{E}N_k(\Tau) < \infty$, the bias of $\hat{\mu}_k(\Tau)$ is given as
% 	\begin{align} \label{eq:bias}
% 	\E\left[ \hat{\mu}_k(\Tau) - \mu_k \right] =- \frac{\mathrm{Cov}\left(\hat{\mu}_k(\Tau), N_k(\Tau)\right)}{\E\left[ N_k(\Tau)\right]}. 
%     \end{align}
%     If the data collecting strategy is monotonically increasing with respect to  the $k$-th distribution, for example under optimistic sampling, then we have
% 	\begin{equation} \label{eq::sign_of_cov_bias_increaing}
% 	\mathrm{Cov}\left(\hat{\mu}_k(\Tau), N_k(\Tau)\right) \geq 0~~\text{and}~~\E\left[ \hat{\mu}_k(\Tau) - \mu_k \right] \leq 0. 
% 	\end{equation}
% 	Similarly if the data collecting is monotonically decreasing with respect to the $k$-th distribution, for example under optimistic stopping, then we have
% 	\begin{equation} \label{eq::sign_of_cov_bias_decreasing}
% 	\mathrm{Cov}\left(\hat{\mu}_k(\Tau), N_k(\Tau)\right) \leq 0~~\text{and}~~\E\left[ \hat{\mu}_k(\Tau) - \mu_k \right] \geq 0. 
% 	\end{equation}
% \end{theorem}
% }

\begin{proposition} \label{prop::bias_expression}
	Let $\Tau$ be a stopping time with respect to the natural filtration $\{\mathcal{F}_t\}$. For each fixed $k \in [K]$ such that $0 < \mathbb{E}N_k(\Tau) < \infty$, the bias of $\hat{\mu}_k(\Tau)$ is given as
	\begin{align} \label{eq:bias}
	\E\left[ \hat{\mu}_k(\Tau) - \mu_k \right] =- \frac{\mathrm{Cov}\left(\hat{\mu}_k(\Tau), N_k(\Tau)\right)}{\E\left[ N_k(\Tau)\right]}. 
	\end{align}
\end{proposition}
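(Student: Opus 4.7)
The plan is to establish a Wald-type identity $\E[S_k(\Tau)] = \mu_k\,\E[N_k(\Tau)]$ under the given hypotheses, and then rewrite the covariance using the elementary identity $\hat\mu_k(\Tau)\,N_k(\Tau) = S_k(\Tau)$ to obtain the stated formula.

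First, I would identify $M_t := S_k(t) - \mu_k N_k(t) = \sum_{s=1}^{t} D_s$, with increments $D_s := \mathbbm{1}(A_s = k)(Y_s - \mu_k)$, as an $\{\F_t\}$-martingale. Since $A_s$ is $\F_{s-1}$-predictable and, conditionally on $\F_{s-1}$ together with $A_s = k$, $Y_s$ is a fresh draw from $P_k$ with mean $\mu_k$, we have $\E[D_s \mid \F_{s-1}] = \mathbbm{1}(A_s=k)\cdot 0 = 0$.

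Next, I would apply optional stopping in a way that does not require $\E[\Tau] < \infty$. For each $n \in \mathbb{N}$, $\Tau \wedge n$ is bounded, so $\E[M_{\Tau \wedge n}] = 0$. Let $c_k := \E_{Y \sim P_k}|Y - \mu_k| \leq \E_{Y \sim P_k}|Y| + |\mu_k| < \infty$, which is finite because $\mu_k$ exists. Using that $\{s \leq \Tau\} \in \F_{s-1}$ and $A_s$ is $\F_{s-1}$-measurable,
\[
\E\!\left[\sum_{s=1}^{\infty} |D_s|\,\mathbbm{1}(s \leq \Tau)\right] = c_k \sum_{s=1}^\infty \E\!\left[\mathbbm{1}(A_s = k)\,\mathbbm{1}(s \leq \Tau)\right] = c_k\,\E[N_k(\Tau)] < \infty.
\]
This furnishes an integrable dominator for $|M_{\Tau \wedge n}|$ uniformly in $n$. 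Since $N_k(\Tau) < \infty$ a.s., $M_{\Tau \wedge n}$ converges a.s. to $M_\Tau = S_k(\Tau) - \mu_k N_k(\Tau)$, and dominated convergence gives $\E[M_\Tau] = \lim_n \E[M_{\Tau \wedge n}] = 0$, i.e., $\E[S_k(\Tau)] = \mu_k\,\E[N_k(\Tau)]$.

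Finally, using $\hat\mu_k(\Tau)\,N_k(\Tau) = S_k(\Tau)$, I would expand
\[
\mathrm{Cov}\!\left(\hat\mu_k(\Tau), N_k(\Tau)\right) = \E[S_k(\Tau)] - \E[\hat\mu_k(\Tau)]\,\E[N_k(\Tau)] = \E[N_k(\Tau)]\left(\mu_k - \E[\hat\mu_k(\Tau)]\right),
\]
and divide by $\E[N_k(\Tau)] > 0$ to obtain~\eqref{eq:bias}. The main obstacle is the optional stopping step: we assume only the mild integrability $\E[N_k(\Tau)] < \infty$ and finite first moments of the arms (no variance, no bound on $\Tau$), so neither the classical Wald identity nor the $L^1$-bounded-increments optional stopping theorem applies off the shelf. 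The bounded-stopping plus dominated convergence route above is what makes both weak assumptions suffice.
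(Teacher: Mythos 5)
Your proposal is correct and follows essentially the same route as the paper: both reduce the proposition to the MAB generalization of Wald's first identity, $\E[S_k(\Tau)] = \mu_k \E[N_k(\Tau)]$, established via the same dominating bound $\sum_{s\geq 1}\E\left[\mathbbm{1}(A_s=k)\mathbbm{1}(\Tau \geq s)\left|Y_s-\mu_k\right|\right] = c_k\,\E[N_k(\Tau)] < \infty$, and then conclude with the same covariance algebra. The only (immaterial) difference is packaging: you truncate at $\Tau \wedge n$ and invoke optional stopping plus dominated convergence, whereas the paper interchanges expectation and the infinite series directly via Tonelli and dominated convergence.
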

The proof may be found in Appendix~\ref{subSec::prop_bias_expression}. A similar expression  was derived in \cite{bowden2017unbiased}, but only for a fixed time $T$.
In order to extend it to stopping times (that are allowed to be infinite, as long as $\mathbb{E}N_k(\Tau) < \infty$), we derive a simple generalization of Wald's first identity to the MAB setting. Specifically, recalling that $S_k(t) = \hat{\mu}_k(t) N_k(t)$, we show the following:
\begin{lemma}\label{lem:Wald-gen}
Let $\Tau$ be a stopping time with respect to the natural filtration $\{\mathcal{F}_t\}$.  For each fixed $k \in [K]$ such that $\mathbb{E}N_k(\Tau) < \infty$, we have
$
\E[S_k(\Tau)] = \mu_k \E[N_k(\Tau)].
$
\end{lemma}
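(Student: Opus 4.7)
The plan is to identify an appropriate martingale and apply an optional stopping argument, treating this as a generalization of the classical Wald identity to the MAB setting.

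First, I would show that $M_k(t) := S_k(t) - \mu_k N_k(t)$ is a martingale with respect to $\{\F_t\}$. Its increment is $M_k(t) - M_k(t-1) = \mathbbm{1}(A_t = k)(Y_t - \mu_k)$. Since $A_t$ is predictable (hence $\F_{t-1}$-measurable) and, by the experimental setup, given $\F_{t-1}$ the observation $Y_t$ is an independent draw from $P_{A_t}$, we have $\E[Y_t \mid \F_{t-1}] = \mu_{A_t}$. Therefore $\E[M_k(t) - M_k(t-1) \mid \F_{t-1}] = \mathbbm{1}(A_t = k)(\mu_{A_t} - \mu_k) = 0$, confirming the martingale property.

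Next, I would invoke the form of optional stopping that concludes $\E M_k(\Tau) = \E M_k(0) = 0$ whenever $\E\sum_{s=1}^{\Tau} |M_k(s) - M_k(s-1)| < \infty$; rearranging this identity gives the claim. By Tonelli,
\[
\E\sum_{s=1}^{\Tau} |M_k(s) - M_k(s-1)| = \sum_{s=1}^{\infty} \E\left[\mathbbm{1}(A_s = k)\,\mathbbm{1}(s \leq \Tau)\,|Y_s - \mu_k|\right].
\]
The indicator $\mathbbm{1}(A_s = k)\,\mathbbm{1}(s \leq \Tau)$ is $\F_{s-1}$-measurable because $A_s$ is predictable and $\{s \leq \Tau\} = \{\Tau \leq s-1\}^c \in \F_{s-1}$. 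Conditioning on $\F_{s-1}$ and using that $Y_s$ given $A_s = k$ is distributed as $P_k$, the right-hand side equals $c_k\, \E N_k(\Tau) < \infty$, where $c_k := \E_{Y \sim P_k}|Y - \mu_k|$ is finite by the finite-mean assumption on $P_k$.

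The main obstacle is that the classical Wald identity requires $\E \Tau < \infty$, whereas here we only assume $\E N_k(\Tau) < \infty$, which is strictly weaker and permits $\Tau = \infty$ with positive probability. The observation that makes the argument go through is that $M_k$ only changes at the sub-sequence of times when arm $k$ is actually drawn, so its \emph{effective} number of increments is $N_k(\Tau)$ rather than $\Tau$. This is precisely what allows bounding the $L^1$-sum of increments by $c_k\, \E N_k(\Tau)$ in place of the usual $c_k\, \E \Tau$, making the generalization to an MAB stopping time essentially free once the correct martingale is chosen.
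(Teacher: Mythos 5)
Your proof is correct and takes essentially the same route as the paper: the paper likewise expands $S_k(\Tau)-\mu_k N_k(\Tau)$ into the increments $\mathbbm{1}(A_t=k)\mathbbm{1}(\Tau\geq t)(Y_t-\mu_k)$, uses Tonelli and conditioning on $\F_{t-1}$ (exploiting that $\mathbbm{1}(A_t=k)\mathbbm{1}(\Tau\geq t)$ is $\F_{t-1}$-measurable) to bound the sum of absolute increments by $c_k\,\E[N_k(\Tau)]<\infty$, and then interchanges expectation and sum so that each term vanishes. Your packaging of this as optional stopping for the martingale $S_k(t)-\mu_k N_k(t)$ with $L^1$-summable increments is a restatement of the same computation (and mirrors the martingale-plus-dominated-convergence technique the paper employs for its main theorem).
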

This lemma is also proved in Appendix \ref{subSec::prop_bias_expression}.
Proposition \ref{prop::bias_expression} provides a simple, and somewhat intuitive, expression of the bias for each arm. It implies that if the covariance of the sample mean of an arm and the number of times it was sampled is positive (negative), then the bias is negative (positive). We now formalize this intuition below, including for  adaptively chosen arms. The following theorem shows that if the adaptive sampling, stopping and choosing rules are monotonically increasing (or decreasing), then the sample mean is  positively (or negatively) biased.
\begin{theorem} \label{thm::bias_sign}
	Let $\Tau$ be a stopping time with respect to the natural filtration $\{\mathcal{F}_t\}$ and let $\kappa : \D_{\Tau} \mapsto [K]$ be a choosing rule. Suppose each arm has finite expectation and, for all $k$ with $\mathbb{P}\left(\kappa =k\right) > 0$, we have $\mathbb{E}\left[N_k(\Tau)\right] < \infty$ and $N_k(\Tau) \geq 1$. If the data collecting strategy is monotonically decreasing, for example under optimistic sampling with nonadaptive stopping and choosing, then we have 
	\begin{equation} \label{eq::sign_of_cov_bias_increasing_each_k}
	\E\left[ \hat{\mu}_\kappa (\Tau)\mid \kappa = k \right] \leq \mu_k,~~\forall k: \mathbb{P}(\kappa = k) > 0,
	\end{equation}
    which also implies that
    \begin{equation}\label{eq::sign_of_cov_bias_increasing}
     \E\left[ \hat{\mu}_\kappa(\Tau) - \mu_\kappa \right] \leq 0.  \end{equation} 
	Similarly if the data collecting strategy is monotonically increasing, for example under optimistic stopping with nonadaptive sampling and choosing, or under optimistic choosing with nonadaptive sampling and stopping, then we have 
	\begin{equation} \label{eq::sign_of_cov_bias_decreasing_each_k}
	\E\left[ \hat{\mu}_\kappa(\Tau)\mid \kappa = k \right] \geq \mu_k,~~\forall k: \mathbb{P}(\kappa = k) > 0,
	\end{equation}
	which also implies that
	 \begin{equation}\label{eq::sign_of_cov_bias_decreasing}
     \E\left[ \hat{\mu}_\kappa(\Tau) - \mu_\kappa \right] \geq 0.  \end{equation} 
	If each arm has a bounded distribution then the condition $\mathbb{E}\left[N_k(\Tau)\right] < \infty$ can be dropped. 
\end{theorem}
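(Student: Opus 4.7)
The plan is to reduce the theorem to a per-arm conditional claim and then use the tabular representation $\D^*_\infty$ of Section~\ref{sec::counterfactual_setting} to express the bias as a sum of one-dimensional covariances whose signs are forced by monotonicity. First I would decompose
\[
\E\!\left[\hat{\mu}_\kappa(\Tau) - \mu_\kappa\right] = \sum_{k:\,\P(\kappa=k)>0} \E\!\left[(\hat{\mu}_k(\Tau) - \mu_k)\,\mathbbm{1}(\kappa=k)\right],
\]
so it suffices to fix $k$ with $\P(\kappa=k)>0$ and control a single summand; this will also deliver the sharper conditional claims \eqref{eq::sign_of_cov_bias_increasing_each_k} and \eqref{eq::sign_of_cov_bias_decreasing_each_k}. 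Using $S_k(\Tau) = \sum_{i \geq 1}\mathbbm{1}(i \leq N_k(\Tau))\,X^*_{i,k}$ and setting $h := \mathbbm{1}(\kappa=k)/N_k(\Tau)$ (so that $h\,N_k(\Tau) = \mathbbm{1}(\kappa=k)$), I would rewrite
\[
\E\!\left[(\hat{\mu}_k(\Tau) - \mu_k)\,\mathbbm{1}(\kappa=k)\right] = \sum_{i=1}^{\infty}\E\!\left[h\cdot\mathbbm{1}(i \leq N_k(\Tau))\cdot(X^*_{i,k}-\mu_k)\right],
\]
the interchange being justified by Tonelli on the absolute-value analogue combined with Lemma~\ref{lem:Wald-gen} applied to $|X^*_{1,k}-\mu_k|$ under $\E[N_k(\Tau)]<\infty$.

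The crucial observation, and the main technical obstacle, is that the event $\{i \leq N_k(\Tau)\}$ does not depend on $X^*_{i,k}$. Operationally, whether arm $k$ is drawn an $i$-th time by $\Tau$ is settled by the algorithm's behaviour up to the moment it either performs that $i$-th draw or halts instead, and both decisions consult only the already-revealed entries $X^*_{1,k},\ldots,X^*_{i-1,k}$ together with other-arm entries. I would formalise this by pathwise induction using the $\F_{t-1}$-measurability of $A_t$ and of $\mathbbm{1}(\Tau \geq t)$ recorded in Section~\ref{sec::acda_setting}. With this in hand, conditioning on $\D^*_\infty \setminus \{X^*_{i,k}\}$ turns $\mathbbm{1}(i \leq N_k(\Tau))$ into a nonnegative constant while $X^*_{i,k}$ remains distributed as $P_k$ and independent of the conditioning, yielding
\[
\E\!\left[h\,\mathbbm{1}(i\leq N_k(\Tau))(X^*_{i,k}-\mu_k)\,\middle|\,\D^*_\infty\!\setminus\!\{X^*_{i,k}\}\right] = \mathbbm{1}(i\leq N_k(\Tau))\cdot\mathrm{Cov}\!\left(h,X^*_{i,k}\,\middle|\,\D^*_\infty\!\setminus\!\{X^*_{i,k}\}\right).
\]

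To finish, I would invoke the one-variable identity $\mathrm{Cov}(\phi(X),X) = \tfrac{1}{2}\E[(\phi(X)-\phi(X'))(X-X')]$ with $X'$ an independent copy of $X$, which makes the covariance nonpositive (resp.\ nonnegative) when $\phi$ is nonincreasing (resp.\ nondecreasing). Under the monotonically decreasing hypothesis, $h = \mathbbm{1}(\kappa=k)/N_k(\Tau)$ is by Definition~\ref{def::monotone_sampling_strategy} nonincreasing in $X^*_{i,k}$ with all other entries held fixed, so every term of the sum over $i$ is $\leq 0$, proving \eqref{eq::sign_of_cov_bias_increasing_each_k} and hence \eqref{eq::sign_of_cov_bias_increasing}; the monotonically increasing case is identical with signs reversed. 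For bounded arms, each summand is bounded in absolute value by $C\,\mathbbm{1}(i\leq N_k(\Tau))/N_k(\Tau)$ with $C$ a uniform bound on $|X^*_{i,k}-\mu_k|$, whose $i$-sum is at most $C$, so Fubini applies directly and the hypothesis $\E[N_k(\Tau)]<\infty$ can be dropped.
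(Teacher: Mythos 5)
Your argument is correct, and it reaches the theorem by a genuinely different route than the paper. You decompose the per-arm bias over the \emph{row index} $i$ of the table, $\E[(\hat{\mu}_k(\Tau)-\mu_k)\mathbbm{1}(\kappa=k)]=\sum_i \E[h\,\mathbbm{1}(i\le N_k(\Tau))(X^*_{i,k}-\mu_k)]$ with $h=\mathbbm{1}(\kappa=k)/N_k(\Tau)$, use the (true, and indeed essential) fact that $\{N_k(\Tau)\ge i\}$ is insensitive to $X^*_{i,k}$ — your pathwise-induction sketch is the right formalization, since the two runs of the algorithm coincide until the $(i,k)$ entry is first read — and then condition on $\D^*_\infty\setminus\{X^*_{i,k}\}$ so that Definition~\ref{def::monotone_sampling_strategy} turns each summand into a one-dimensional covariance $\mathrm{Cov}(\phi(X),X)$ with $\phi\in[0,1]$ monotone, whose sign follows from the symmetrization identity; your Fubini justification via the bound $c_k\,\E[N_k(\Tau)]$ (the absolute-value form of Lemma~\ref{lem:Wald-gen}) and your bounded-case domination $\sum_i \mathbbm{1}(i\le N_k(\Tau))/N_k(\Tau)\le 1$ both match the integrability computations the paper needs anyway. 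The paper instead decomposes over \emph{time} $t$: it proves the per-step conditional inequality~\eqref{eq::monotone_increasing_ineq} by the same independent-copy exchange argument conditioned on $\F_{t-1}$, then packages the terms into the process $L(t)=\E[\tfrac{S_k(t)-\mu_k N_k(t)}{N_k(\Tau)}\mathbbm{1}(\kappa=k)\mid\F_t]$, shown to be a super/sub-martingale, and concludes via optional stopping plus a dominated-convergence step with the same $c_k\,\E[N_k(\Tau)]$ (or bounded-support) envelope. What your route buys is elementarity and transparency: no martingale or optional-stopping machinery, and the monotonicity hypothesis is used exactly once, as a Chebyshev-type correlation inequality applied entry by entry. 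What the paper's route buys is a time-localized supermartingale structure that interfaces directly with stopping-time measurability and whose envelope $U=\sum_s|L(s)-L(s-1)|\mathbbm{1}(\Tau\ge s)$ is what powers the further relaxation in Remark~\ref{remark::moment_condition} (finite $p$-th moment, $p>2$), which your row-wise decomposition would need additional work to recover.
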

\begin{remark} \label{remark::moment_condition}
    In fact, if each arm has a finite $p$-th moment for a fixed $p > 2$ then the condition $\mathbb{E}\left[N_k(\Tau)\right] < \infty$ can be dropped. 
    % It is an open question whether we can show the argument holds under a weaker moment condition. 
\end{remark}

The proofs of \Cref{thm::bias_sign} and \Cref{remark::moment_condition} can be found in \Cref{sec:Proof_of_bias_sign} and are based on martingale arguments that are quite different from the ones used in  \cite{nie2018adaptively}. See also Appendix~\ref{sec::intuition} for an intuitive explanation of the sign of the bias under optimistic sampling, stopping or choosing rules.
The expression~\eqref{eq:bias} intuitively suggests situations when the sample mean estimator $\hat{\mu}_k(\Tau)$ is biased, while the inequalities in \eqref{eq::sign_of_cov_bias_increasing_each_k} and~\eqref{eq::sign_of_cov_bias_decreasing_each_k} determine the direction of bias under the monotonic or optimistic conditions. Due to Facts \ref{fact:starr}, \ref{fact:choice} and \ref{fact:sampling}, several existing results are immediately subsumed and generalized by Theorem~\ref{thm::bias_sign}. Further, the following corollary is a particularly interesting special case dealing with the lil'UCB algorithm by \cite{jamieson_lil_2014} which uses adaptive sampling, stopping and choosing, as summarized in Section~\ref{subSec::simulation_4}.

\begin{corollary} \label{cor::lil'UCB}
The lil'UCB algorithm is a monotonically increasing strategy, and thus the sample mean of the reported arm when  lil'UCB  stops is always positively biased.
\end{corollary}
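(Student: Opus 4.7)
The plan is to apply Theorem~\ref{thm::bias_sign} after verifying that lil'UCB satisfies the ``monotonically increasing'' condition of Definition~\ref{def::monotone_sampling_strategy}, namely that $\mathbbm{1}(\kappa = k)/N_k(\Tau)$ is a non-decreasing function of $X^*_{i,k}$ when all other entries of $\D^*_\infty$ are held fixed, for every $i \in \mathbb{N}$ and $k \in [K]$. Recall the structure of lil'UCB: it (i) samples the arm with the largest lil'-style upper confidence bound, (ii) stops at $\Tau = \inf\{t : \exists\, k' \text{ with } N_{k'}(t) \geq 1 + \lambda \sum_{j \neq k'} N_j(t)\}$ for a fixed parameter $\lambda > 0$, and (iii) returns the triggering arm $\kappa$. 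All three components are adaptive, so the remark following Definition~\ref{def::monotone_sampling_strategy} (``one adaptive, others nonadaptive'') does not apply directly, and a joint argument is required.

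I would proceed by a coupling argument. Fix $i$ and $k$ and consider two runs of lil'UCB on tabular data $\D^*_\infty$ and $\tilde{\D}^*_\infty$ that agree everywhere except that $\tilde{X}^*_{i,k} > X^*_{i,k}$, driven by the same external seeds $\{W_t\}$. Until arm $k$ is pulled for the $i$-th time (call this step $T_0$) the two trajectories are identical; if arm $k$ is never pulled $i$ times, the perturbation is invisible and the claim is trivial. The core technical step is an induction on $t \geq T_0$ establishing the coupling invariant
\[
\tilde{N}_k(t) \geq N_k(t) \quad\text{and}\quad \tilde{N}_j(t) \leq N_j(t) \text{ for every } j \neq k,
\]
for all $t \leq \min(\Tau,\tilde{\Tau})$. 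This uses the same ``Exploit $+$ IIO'' style structure that underlies Fact~\ref{fact:sampling}: after step $T_0$ the sample mean of arm $k$ in the modified run can only dominate, hence its UCB index weakly dominates, and the argmax-of-UCB step therefore either matches the original action or swaps an arm-$j \neq k$ pull into an arm-$k$ pull, both of which preserve the invariant.

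Given the invariant, the remainder is a short algebraic check. Suppose first that $\kappa = k$ in the original run. Then $\tilde{N}_k(\Tau) \geq N_k(\Tau) \geq 1 + \lambda \sum_{j \neq k} N_j(\Tau) \geq 1 + \lambda \sum_{j \neq k} \tilde{N}_j(\Tau)$, so arm $k$ already satisfies the stopping criterion in the modified run by time $\Tau$. Moreover, no arm $j \neq k$ can trigger earlier in the modified run because the identity $\sum_{j' \neq j} \tilde{N}_{j'}(t) = t - \tilde{N}_j(t) \geq t - N_j(t) = \sum_{j' \neq j} N_{j'}(t)$ makes the modified threshold for $j$ at least as stringent as the original one, and the original threshold is unmet for all $t < \Tau$. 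Hence $\tilde{\kappa} = k$ and $\tilde{\Tau} \leq \Tau$. Rewriting the stopping criterion as $N_{k'}(t)(1+\lambda) \geq 1 + \lambda t$ and using that it is not met at $\tilde{\Tau} - 1$ gives $\tilde{N}_k(\tilde{\Tau}) \leq \tilde{N}_k(\tilde{\Tau}-1)+1 < (1+\lambda(\tilde{\Tau}-1))/(1+\lambda) + 1 = (2+\lambda\tilde{\Tau})/(1+\lambda) \leq N_k(\Tau) + 1/(1+\lambda) < N_k(\Tau) + 1$; since both sides are integers, $\tilde{N}_k(\tilde{\Tau}) \leq N_k(\Tau)$. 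When $\kappa \neq k$ the desired inequality is trivial. In all cases $\mathbbm{1}(\tilde{\kappa} = k)/\tilde{N}_k(\tilde{\Tau}) \geq \mathbbm{1}(\kappa = k)/N_k(\Tau)$, lil'UCB is monotonically increasing, and Theorem~\ref{thm::bias_sign} yields the claimed positive bias.

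The main obstacle is the coupling invariant itself. Tracking how the UCB indices of \emph{all} arms respond to a single-entry perturbation is delicate: a single switch in action can in principle cascade into very different pull counts and sample means later on. The clean inductive statement above --- that at every step the modified run either matches the original or swaps an arm-$j \neq k$ pull for an arm-$k$ pull --- is precisely what restricts those cascades, and its proof has to reach inside the argument that yields Fact~\ref{fact:sampling} rather than cite Fact~\ref{fact:sampling} as a black box, since that fact only controls the marginal $N_k(t)$ and not the joint vector $(N_1(t),\dots,N_K(t))$ needed to compare stopping times and chosen arms.
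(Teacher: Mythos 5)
Your overall route is the same as the paper's: couple two tabular datasets differing in the single entry $X^*_{i,k}$, establish the joint count dominance $\tilde{N}_k(t)\geq N_k(t)$ and $\tilde{N}_j(t)\leq N_j(t)$ for $j\neq k$, and then compare boundary crossings to conclude $\tilde{\kappa}=k$, $\tilde{\Tau}\leq\Tau$ and $\tilde{N}_k(\tilde{\Tau})\leq N_k(\Tau)$. Your endgame, given the invariant, is correct and in fact somewhat more direct than the paper's argument (the paper assumes $\kappa=k$, $\kappa'\neq k$ and derives a contradiction via $\Tau=\Tau'$; you argue directly that no arm $j\neq k$ can cross before arm $k$ does, and your integer-rounding bound $\tilde{N}_k(\tilde{\Tau})\leq N_k(\Tau)$ is fine). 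Two small points of care: the choosing rule is $\kappa=\argmax_k N_k(\Tau)$, not literally ``the triggering arm'' (they coincide because only the arm pulled at the first crossing time can cross there, so the crossing arm is the strict argmax — worth a line); and since you use $\tilde{N}_j(\Tau)$ even when $\tilde{\Tau}<\Tau$, the invariant should be stated for the sampling rule run for all $t$, with stopping only truncating the data, rather than only for $t\leq\min(\Tau,\tilde{\Tau})$.

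The genuine gap is in the justification of the coupling invariant itself, which you correctly flag as the crux but support with a sketch that does not hold as stated. Once the pull counts diverge, the modified run has observed \emph{extra} arm-$k$ samples that the original has not, so its arm-$k$ sample mean need not dominate pointwise in calendar time, and the UCB bonus also shrinks with the larger count — so ``its UCB index weakly dominates'' fails. Likewise the one-step ``match or swap a $j\neq k$ pull into a $k$ pull'' induction does not close: if the two runs pull two different non-$k$ arms whose counts are currently tied, the inequality $\tilde{N}_j(t)\leq N_j(t)$ breaks for the arm pulled only in the modified run. The correct argument is not pointwise dominance but the one the paper uses: Exploit is invoked only at the \emph{first} time the arm-$k$ counts would cross (where the two histories differ in exactly one arm-$k$ observation, so the perturbed sample mean genuinely dominates), giving $\tilde{N}_k(t)\geq N_k(t)$ for all $t$ (Fact~\ref{fact:sampling}), and the companion bound $\tilde{N}_j(t)\leq N_j(t)$ for $j\neq k$ comes from the IIO time-change argument of Lemma~\ref{lemma::cor_of_IIO}, which matches the actions of the two runs along the subsequence of times at which arm $k$ is not pulled. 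So, contrary to your closing remark, the joint vector control you need is available as a citable lemma (Fact~\ref{fact:sampling} for the $k$-th coordinate plus Lemma~\ref{lemma::cor_of_IIO} for the others, both applicable since lil'UCB sampling satisfies Exploit and IIO); with those in hand, the rest of your argument goes through.
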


The proof is described in Appendix~\ref{Appen::subSec::lilUCB_proof}. The above result is interesting because of the following reasons: (a) when viewed separately, the sampling, stopping and choosing rules of the lil'UCB algorithm all seem to be optimistic (however, they are not optimistic, because our definition requires two out of three to be nonadaptive); hence it is apriori unclear which rule dominates and whether the net bias should be positive or negative; (b) we did not have to alter anything about the algorithm in order to prove that it is a monotonically increasing strategy (for any distribution over arms, for any number of arms). The generality of the above result showcases the practical utility of our theorem, whose message is in sharp contrast to the title of the paper by \cite{nie2018adaptively}.

% We remark that several special cases of the above theorem are already more general than previously known results. For example, .
Next, we provide simulation results that verify that our monotonic and optimistic conditions accurately capture the sign of the bias of the sample mean.

\section{Numerical experiments} \label{sec::smulations}

\subsection{Negative bias from optimistic sampling rules in multi-armed bandits} \label{subSec::simul_1}
% One of common MAB objectives is to minimize regret which is defined as the expected difference between the sum of rewards (observations) from the optimal arm and the sum of the collected rewards by a chosen algorithm. To achieve this goal, 
Recall Fact~\ref{fact:sampling}, which stated that common MAB adaptive sampling strategies like greedy (or $\epsilon$-greedy), upper confidence bound (UCB) and Thompson sampling are optimistic. 
Thus, for a deterministic stopping time, Theorem~\ref{thm::bias_sign} implies that the sample mean of each arm is always negatively biased. 
To demonstrate this, we conduct a simulation study in which we have three unit-variance Gaussian arms with $\mu_1 = 1, \mu_2 = 2$ and $\mu_3 =3$. After sampling once from each arm, greedy, UCB and Thompson sampling are used to continue sampling until $T = 200$. We repeat the whole process from scratch  $10^4$ times for each algorithm to get an accurate estimate for the bias.\footnote{In all experiments, sizes of reported biases are larger than at least 3 times the Monte Carlo standard error.}
Due to limited space, we present results from UCB and Thompson sampling only but detailed configurations of algorithms and a similar result for the greedy algorithm can be found in Appendix~\ref{appen::subSec::simul_1}. 
Figure~\ref{fig::simul_1_UCB_TS} shows the distribution of observed differences between sample means and the true mean for each arm. Vertical lines correspond to biases. The example demonstrates that the sample mean is negatively biased under optimistic sampling rules. 

\begin{remark}
The main goal in our simulations is to visualize and corroborate our theoretical results about the sign of the bias. As a result, we do not make any attempt to optimize the parameters for UCB or Thompon sampling for the purpose of minimizing the regret, since the latter is not the paper's aim. However, investigating the relationship between the performance of MAB algorithms and the bias at the time horizon would be an interesting future direction of research.
\end{remark}

\begin{figure}[h!]
	\begin{center}
	\includegraphics[scale =  0.6]{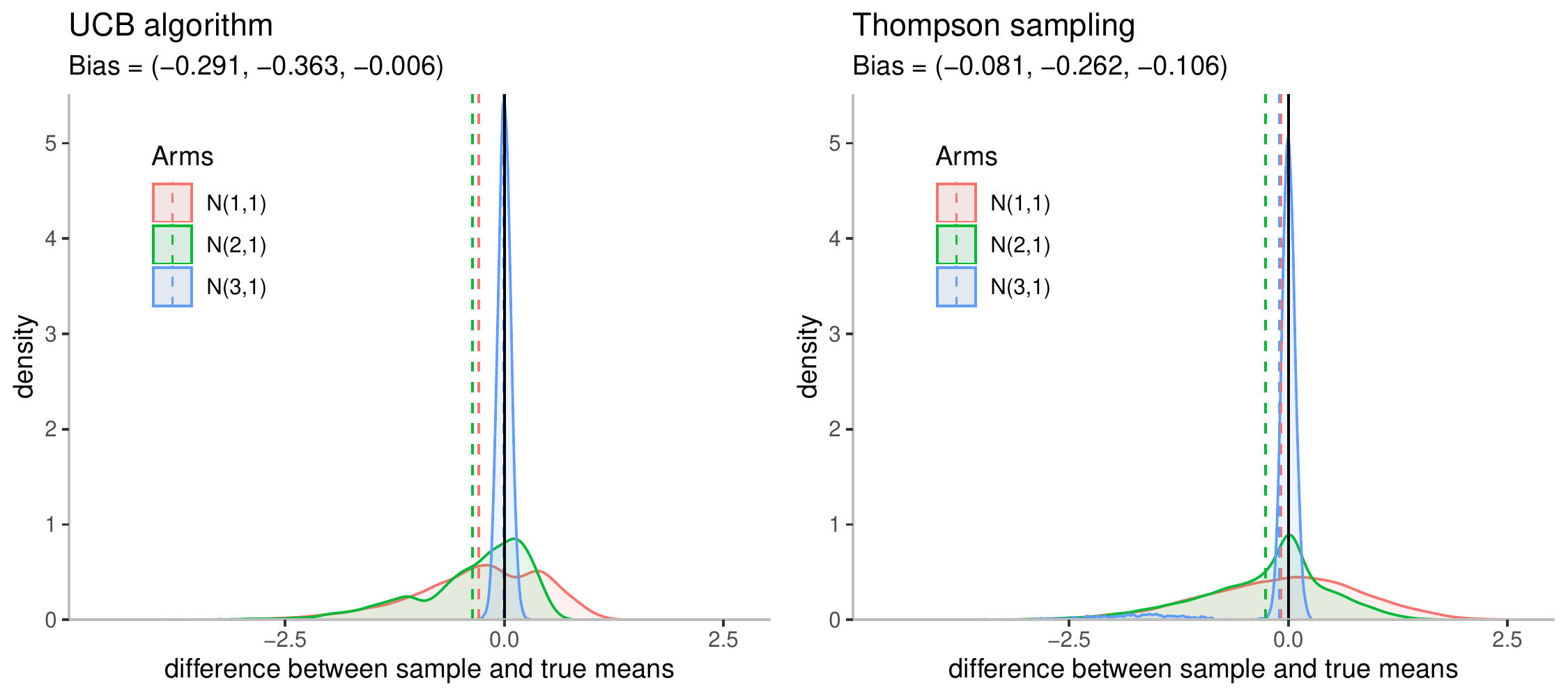}
	\end{center}
	\caption{\em  Data is collected by UCB (left) and Thompson sampling (right) algorithms from three unit-variance Gaussian arms with $\mu_1 =1, \mu_2 = 2$ and $\mu_3 = 3$. For all three arms, sample means are negatively biased (at fixed times). A similar result for the greedy algorithm can be found in Appendix~\ref{appen::subSec::simul_1}.}
	\label{fig::simul_1_UCB_TS}
\end{figure}

%\textcolor{red}{TODO::Add some remark or discussion about simulations. Especially about the severeness of bias and skewness of distributions.}

\subsection{Bias from stopping a one-sided sequential likelihood ratio test} \label{subSec::simulation_2}

Suppose we have two independent sub-Gaussian arms with common and known parameter $\sigma^2$ but unknown means $\mu_1$ and $\mu_2$. Consider the following testing problem:
\[
H_0 : \mu_1 \leq \mu_2~~\textrm{vs}~~ H_1 : \mu_1 > \mu_2.
\]
To test this hypothesis, suppose we draw a sample from arm $1$ for every odd time and from arm $2$ for every even time. Instead of conducting a test at a fixed time, we can use the following one-sided sequential likelihood ratio test \citep{robbins1970statistical, howard2018uniform}: for any fixed $w > 0$ and $\alpha \in (0,1)$, define a stopping time $\Tau$ as
\begin{equation}
    \Tau^w:=\inf\left\{t \in \mathbb{N}_{\mathrm{even}} : \hat{\mu}_1(t) - \hat{\mu}_2(t) \geq \frac{2\sigma}{t} \sqrt{\left(t + 2w\right)\log \left(\frac{1}{2\alpha}\sqrt{\frac{t + 2w}{2w}}+1\right)}  \right\},
\end{equation}
where $\mathbb{N}_{\mathrm{even}} := \{2n : n \in \mathbb{N}\}$. For a given fixed maximum even time $M \geq 2$, we stop sampling at time $\Tau_M^w := \min\left\{\Tau^w, M\right\}$. Then, we reject the null $H_0$ if $\Tau_M^w < M$. It can be checked \citep[Section~8]{howard2018uniform} that, for any fixed $w >0$, this test controls the type-1 error at level $\alpha$ and the power goes to $1$ as $M$ goes to infinity.  

%Whether we reject the null or nor, we may wonder the size of the mean for each arm and the sample mean is one of the most commonly used estimator.
For the arms $1$ and $2$, these are special cases of optimistic and pessimistic stopping rules respectively. From Theorem~\ref{thm::bias_sign}, we have that $\mu_1 \leq \mathbb{E}\hat{\mu}_1(\Tau_M^w)$  and $\mu_2 \geq \mathbb{E}\hat{\mu}_2(\Tau_M^w)$.
% In fact, if we treat each difference between samples from two arms at time $t$ and $t+1$ for all $t\geq1$ then we can further show that
% \[
% \mathbb{E}\hat{\mu}_2(\Tau_M) - \mu_2 \leq 0 \leq \mathbb{E}\hat{\mu}_1(\Tau_M) - \mu_1,
% \]
% which can be also viewed a direct application of \citet{starr1968remarks}.
To demonstrate this, we conduct two simulation studies with unit variance Gaussian errors: one under the null hypothesis  $(\mu_1, \mu_2) = (0,0)$, and one under the alternative hypothesis $(\mu_1, \mu_2) = (1,0)$. We choose $M = 200$, $w = 10$ and $\alpha = 0.1$. As before, we repeat each experiment $10^4$ times for each setting. Figure~\ref{fig::simul_2} shows the distribution of observed differences between sample means and the true mean for each arm under null and alternative hypothesis cases. Vertical lines correspond to biases. The simulation study demonstrates that the sample mean for arm $1$ is positively biased and the sample mean for arm $2$ is negatively biased as predicted.

\begin{figure}[h!]
	\begin{center}
	\includegraphics[scale =  0.3]{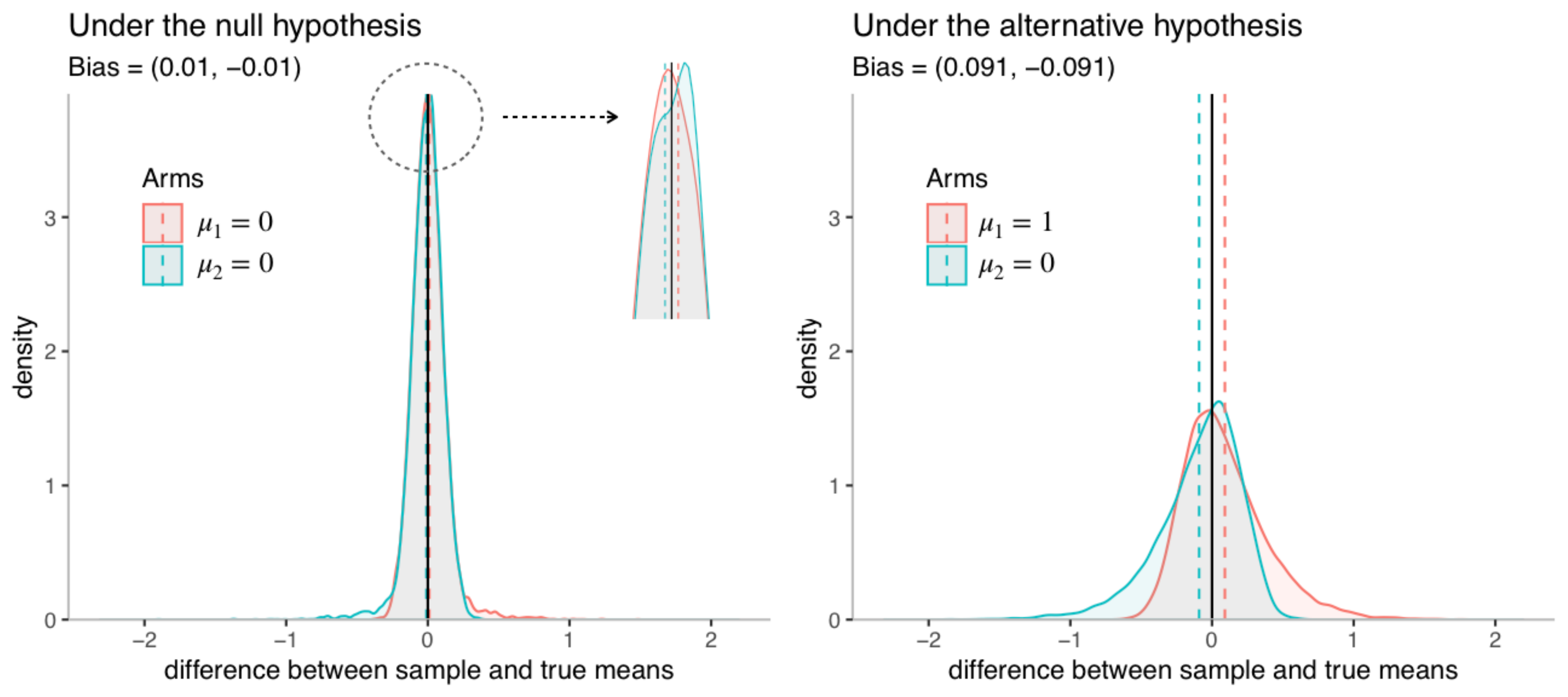}
	\end{center}
	\caption{\em  Data is collected from the one-sided sequential likelihood ratio test procedure described in Section~\ref{subSec::simulation_2}. The sample mean for arm $1$ is positively biased and the sample mean for arm $2$ is negatively biased under both null and alternative hypothesis cases. Note that the size of the bias under the null hypothesis is smaller than the one under the alternative hypothesis since the number of collected samples is larger under the null hypothesis. }
	\label{fig::simul_2}
\end{figure}

% \textcolor{blue}{[this paragraph can be moved to discussion]}

\subsection{Positive bias of the lil'UCB algorithm in best-arm identification}
\label{subSec::simulation_4}

Suppose we have $K$ sub-Gaussian arms with mean $\mu_1, \dots, \mu_K$ and known parameter $\sigma$. In the best-arm identification problem, our target of inference is the arm with the largest mean. There exist many algorithms for this task including lil'UCB  \citep{jamieson_lil_2014}, Top-Two Thompson Sampling \citep{russo2016simple} and Track-and-Stop \citep{garivier2016optimal}. 

In Corollary~\ref{cor::lil'UCB}, we showed that the lil'UCB algorithm is monotonically increasing, and thus the sample mean of the chosen arm is positively biased. In this subsection, we verify it with a simulation. It is an interesting open question whether different types of best-arm identification algorithms also yield positively biased sample means. 

The lil'UCB algorithm consists of the following optimistic sampling, stopping and choosing:
\begin{itemize}
    \item Sampling: For any $k \in [K]$ and $t = 1,\dots K$, define $\nu_t(k) = \mathbbm{1}(t = k)$. For $t > K$,
		\[
	\nu_t(k) = \begin{cases}
	1 & \text{if } k = \argmax_{j \in [K]} \hat{\mu}_j(t-1) + u_t^{\text{lil}}\left(N_j(t-1)\right), \\
	0 &  \text{otherwise, }
	\end{cases}
	\]
		where $\delta, \epsilon, \lambda$ and $\beta$ are algorithm parameters and  
		\[
		u_t^{\text{lil}}\left(n\right):= (1+\beta)(1+\sqrt{\epsilon})\sqrt{2\sigma^2(1+\epsilon)\log\left(\log((1+\epsilon)n) /\delta\right)/n}.
		\]
    \item Stopping: $\Tau = \inf\left\{t > K : N_k(t) \geq 1 + \lambda\sum_{j\neq k}N_j(t) \text{ for some }k \in [K] \right\}$.
    \item Choosing: $\kappa = \argmax_{k \in [K]} N_k(\Tau)$.
\end{itemize}

Once we stop sampling at time $\Tau$, the lil'UCB algorithm guarantees that $\kappa$ is the index of the arm with largest mean with some probability depending on input parameters. Based on this, we can also estimate the largest mean by the chosen stopped sample mean $\hat{\mu}_{\kappa}\left(\Tau\right)$. 
The performance of this sequential procedure can vary based on underlying distribution of the arm and the choice of parameters. However, 
we can check this optimistic sampling  and optimistic stopping/choosing rules which would yield negative and positive biases respectively are monotonic increasing and thus the chosen stopped sample mean $\hat{\mu}_{\kappa}\left(\Tau\right)$ is always positively biased for any choice of parameters. 

To verify it with a simulation, we set $3$ unit-variance Gaussian arms with means $(\mu_1, \mu_2, \mu_3) = (g, 0, -g)$ for each gap parameter $g = 1, 3, 5$. We conduct $10^4$ trials of the lil'UCB algorithm with a valid choice of parameters described in \citet[Section~5]{jamieson_lil_2014}. Figure~\ref{fig::simul_4} shows the distribution of observed differences between the chosen sample means and the corresponding true mean for each $\delta$. Vertical lines correspond to biases. The simulation study demonstrates that, in all configurations, the chosen stopped sample mean $\hat{\mu}_{\kappa}\left(\Tau \right)$ is always positively biased. (see Appendix~\ref{Appen::subSec::lilUCB_proof} for a formal proof.)
% Note that the bias is larger for a larger gap since the number of collected samples smaller on an easier task.

\begin{figure}[h!]
	\begin{center}
	\includegraphics[scale =  0.2]{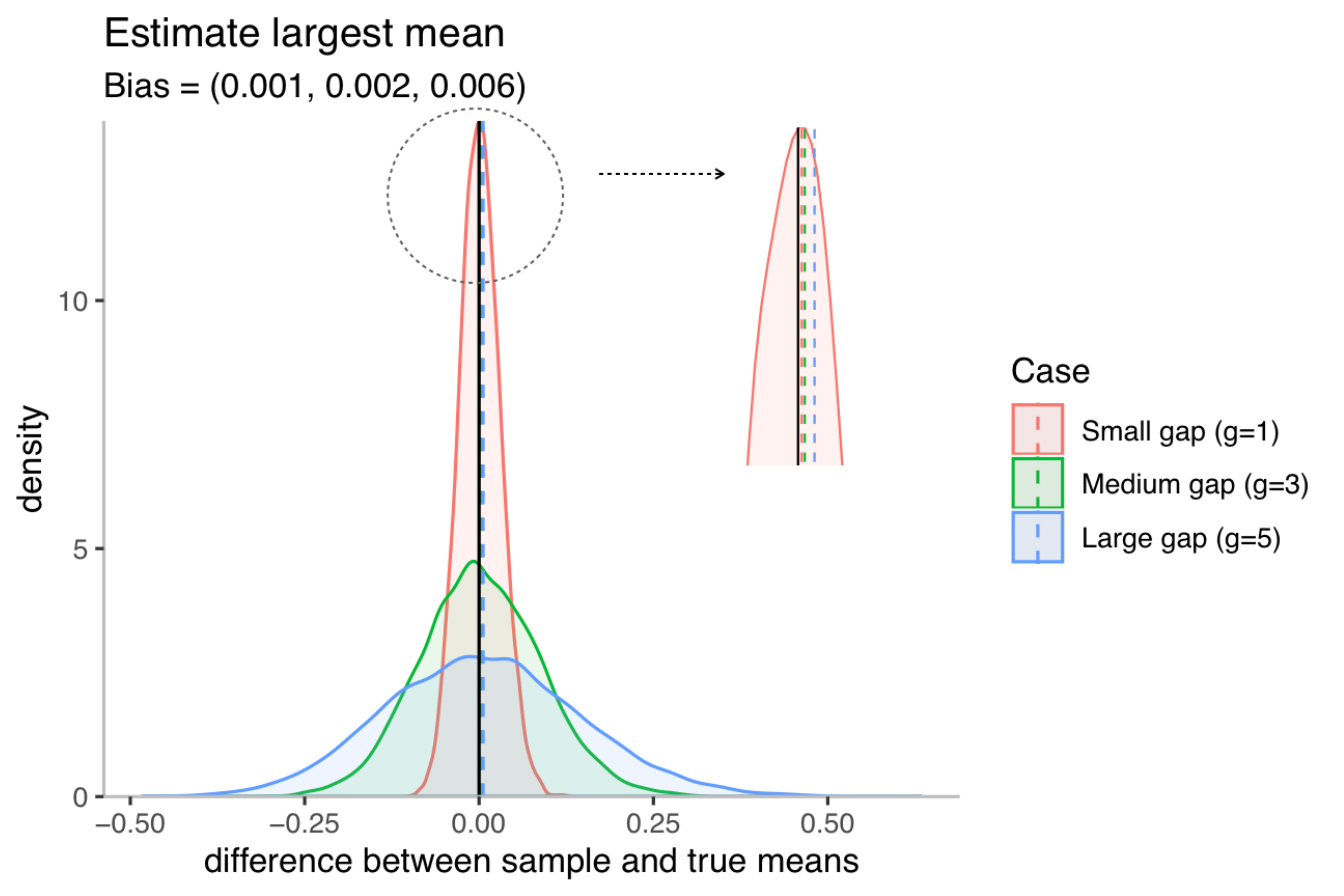}
	\end{center}
	\caption{\em  Data is collected by the lil'UCB algorithm run on three unit-variance Gaussian arms with $\mu_1=g, \mu_2 = 0$ and $\mu_3 = -g$ for each gap parameter $g = 1, 3, 5$. For all cases, chosen sample means are positively biased. The bias is larger for a larger gap since the number of collected samples is smaller on an easier task.}
	\label{fig::simul_4}
\end{figure}

\section{Summary} \label{sec::summary}

This paper provides a general and comprehensive characterization of the sign of the bias of the sample mean in multi-armed bandits. Our main conceptual innovation was to define new weaker conditions (monotonicity and optimism) that capture a wide variety of practical settings in both the random walk (one-armed bandit) setting and the MAB setting. Using this, our main theoretical contribution, Theorem \ref{thm::bias_sign}, significantly generalizes the kinds of algorithms or rules for which we can mathematically determine the sign of the bias for any problem instance. Our simulations confirm the accuracy of our theoretical predictions for a variety of practical situations for which such sign characterizations were previously unknown. There are several natural followup directions: (a) extending results like Corollary~\ref{cor::lil'UCB} to other bandit algorithms, (b) extending all our results to hold for other functionals of the data like the sample variance, (c) characterizing the magnitude of the bias. We have recently made significant progress on the last question \citep{shin2019bias}, but the other two remain open. 

% \subsection{Comparison with \citet{nie2018adaptively}}
% \textcolor{red}{TODO::Prove Exploit condition implies optimistic sampling.}

% As discussed in Section~\ref{subSec::optimistic_rules}, \citet{nie2018adaptively} introduced Exploit and IIO conditions and proved that if the sampling strategy satisfied two conditions then the sample mean is negatively biased for any fixed arm and at any fixed time. 
% For a fixed arm and time, the optimistic sampling condition we introduced also implies the negative bias of the sample mean. In this subsection, we prove our optimistic sampling condition is strictly weaker than Exploit and IIO conditions in \citet{nie2018adaptively} as following. 

\bibliographystyle{plainnat}
\bibliography{bias_sample_mean}

\appendix
\newpage
\section{\texorpdfstring{$\epsilon$}{eps}-greedy, UCB and Thompson sampling are optimistic sampling rules} \label{Appen::sampling}

\subsection{Descriptions of \texorpdfstring{$\epsilon$}{eps}-greedy, UCB and Thompson sampling rules} \label{Appen::subSec::description}
$\epsilon$-greedy, UCB, and Thomson sampling have the following sampling rules.
\begin{itemize}
	\item $\epsilon$-greedy algorithm : For any $k\in [K]$ and $t \in [\Tau]$,
	\[
	\nu_t(k) = \begin{cases}
	1 - \epsilon & \text{if } k = \argmax_{j \in [K]} \hat{\mu}_j(t-1),\\
	\frac{\epsilon}{K-1} &  \text{otherwise. }
	\end{cases}
	\]
	
	\item UCB : For any $k \in [K]$ and $t \in [\Tau]$,
		\[
	\nu_t(k) = \begin{cases}
	1 & \text{if } k = \argmax_{j \in [K]} \hat{\mu}_j(t-1) +u_{t-1}(S_j(t-1), N_j(t-1)),\\
	0 &  \text{otherwise, }
	\end{cases}
	\]
	 where $(s,n) \mapsto u_{t-1}(s,n)$ is a non-negative function which is increasing and decreasing with respect to the first and second inputs respectively for each $t$. For example, a simple version of UCB uses $u_{t-1}(s, n) = \sqrt{\frac{2\log(1/\delta)}{n}}$ for a properly chosen constant $\delta \in (0,1)$. 
	\item Thompson sampling :  For any $k \in [K]$ and $t \in [\Tau]$,
	\[
	\nu_t(k) \propto \pi(k = \argmax_j \mu_j \mid A_1, Y_1, \dots, A_{t-1}, Y_{t-1}).
	\]
	where $\pi$ is a prior on  $(\mu_1, \dots, \mu_K)$ or, more generally, on parameters of arms $(\theta_1, \dots, \theta_K)$. In particular, if underlying arms are Gaussian with common variance $\sigma^2$ and if we impose independent Gaussian prior $N(\mu_{k,0}, \sigma_{0}^2)$ on each arm $k$, the corresponding Thompson sampling is statistically equivalent to the following rule.
	\[
	\nu_t(k) = \begin{cases}
	1 & \text{if } k = \argmax_{j \in [K]} \tilde{\mu}_j(t-1) + \sigma_j(t-1) Z_{j, t-1}\\
	0 &  \text{otherwise, }
	\end{cases}
	\]
	where each $Z_{j, t-1}$ is an independent draw from $N(0,1)$ and $\tilde{\mu}_j(t-1), \sigma_k(t-1)$ are the posterior mean and standard deviation of arm $j$, given as
	\[
	\tilde{\mu}_j(t-1) = \frac{\mu_{j,0}/\sigma_0^2 + N_j(t-1) \hat{\mu}_j(t-1)/\sigma^2}{1/\sigma_0^2 + N_j(t-1)/\sigma^2 },
	~~\sigma_j(t-1) = \left(1/\sigma_0^2 + N_j(t-1)/\sigma^2\right)^{-1/2}.
	\]
\end{itemize}

\subsection{Exploit and IIO conditions are sufficient for optimistic sampling} \label{Appen::subSec::proof_claim}

%\begin{proof}[Proof of Claim~\ref{claim::exploit_imply_optimistic}]
%For any fixed arm $k$ and $T$, it is enough to show that under Exploit condition, the function $\D_\infty^* \mapsto  N_k(T)$ is an increasing function of $X_{i,k}^*$ for any $t \leq T$ while keeping all other entries in $\D_\infty^*$ fixed. 

In Fact~\ref{claim::exploit_imply_optimistic}, we claimed that  ``Exploit'' and ``IIO'' conditions in \citet{nie2018adaptively} are jointly a sufficient condition for a sampling rule being optimistic. In this subsection, we formally restate Exploit and IIO conditions of \cite{nie2018adaptively} in terms of our notations and prove Fact~\ref{claim::exploit_imply_optimistic}.

First, fix a deterministic stopping time $T$. Given any $t \in [T], k \in [K]$, define respectively the data from arm $k$ until time $t$, and the data from all arms except $k$ until time $t$, as
\[
\D_t^{(k)} := \left\{X_{i,k}^*\right\}_{i=1}^{N_k(t)} ~~\text{and}~~ \D_t^{(-k)} := \D_t \setminus \D_t^{(k)} = \bigcup_{j \neq k} \left\{X_{i,j}^*\right\}_{i=1}^{N_j(t)} \cup \{W_{-1}, W_0, \dots, W_{t}\}, 
\]
where $\D_t$ is the sample history up to time $t$ under a tabular model $\D_\infty^*$. Let $\D_\infty^{*'}$ be another tabular model. Under $\D_\infty^{*'}$, we define $\D'_t, {\D'}_t^{(k)}$ and ${\D'}_t^{(-k)}$ in the same way.  The Exploit condition in \citet{nie2018adaptively} can be rewritten as following.

\begin{definition}[Exploit]
Given any $t \in [T], k \in [K]$, suppose $\D_t^{(k)}$ and ${\D'}_t^{(k)}$ have the same size (that is $N_k'(t) = N_k(t)$) and $ \D_t^{(-k)} = {\D'}_t^{(-k)}$. If the sample mean $\hat{\mu}_k(t)$ under $\D_t^{(k)}$ is less than or equal to the sample mean $\hat{\mu}_k'(t)$ under ${\D'_t}^{(k)}$, then 
\[
\mathbbm{1}(A_t = k) := f_{t,k}\left(\D_t^{(k)} \cup \D_t^{(-k)}\right) \leq 
f_{t,k}\left({\D'}_t^{(k)} \cup \D_t^{(-k)}\right) =: \mathbbm{1}(A_t' = k). 
\]
\end{definition}

For the IIO condition, we present a specific version in the MAB setting which was originally used in Eq.(8) in the proof of Theorem 1 in \citet{nie2018adaptively}.

\begin{definition}[Independence of Irrelevant Options (IIO)]
 For each $t, k$, the sampling random variable $A_t$ can be written in terms of deterministic functions $f_{t,k}$ and $g_{t,k}$ such that
 	\[
	A_t = \begin{cases}
	k & \text{if } f_{t,k}\left(\D_{t-1}\right) = 1\\
	j &  \text{if } f_{t,k}\left(\D_{t-1}\right) = 0~~\text{and}~~g_{t,k}\left(\D_{t-1}^{(-k)}\right) = j~~\text{for some}~j \neq k. 
	\end{cases}
	\]
\end{definition}
Intuitively, $f_{t,k}$ is simply the indicator of whether arm $k$ was pulled at time $t$; the crucial part is $g_{t,k}$, which specifies which arm is selected when arm $k$ is not, and the IIO condition requires that $g_{t,k}$ ignores the data from arm $k$ in order to determine which $j \neq k$ to pull instead.

It can be checked that $\epsilon$-greedy, UCB and Thompson sampling under Gaussian arms and Gaussian priors satisfy both conditions. Indeed, if arm $k$ is not the arm with the highest mean or highest UCB (for example), determining which other arm does get pulled in the next step does not depend on the data from arm $k$. In Appendix~\ref{Appen::subSec::suff_cond_TS}, we present a sufficient condition for Thompson sampling to satisfy both conditions, and thus to be optimistic  which shows Thompson sampling is optimistic for many commonly used exponential family arms including Gaussian, Bernoulli, exponential and Possion arms with their conjugate priors. 

Before we prove Fact~\ref{claim::exploit_imply_optimistic}, we first introduce a lemma related to the IIO condition as follows.
\begin{lemma} \label{lemma::cor_of_IIO} 
Fix a $k \in [K]$. Let $\D_\infty^*$ and $\D_\infty^{*'}$ be two MAB tabular representation that  agree with  each other  except in their $k$-th column. Let $N_j(t)$ and $N_j'(t)$ be the numbers of draws from arm $j$ for all $j \in [K]$ under $\D_\infty^*$ and $\D_\infty^{*'}$ respectively. Then, under IIO, the following implication holds:
\begin{equation}
N_k(t) \leq N_k'(t) \Rightarrow N_j(t) \geq N_j'(t),    \quad \text{ for all } j \neq k.
\end{equation}
By switching the roles of  $\D_\infty^*$ and $\D_\infty^{*'}$, we also have
\begin{equation}
N_k(t) \geq N_k'(t) \Rightarrow N_j(t) \leq N_j'(t),  \quad  \text{ for all } j \neq k,
\end{equation}
 and therefore,
\begin{equation}
N_k(t) = N_k'(t) \Rightarrow N_j(t) = N_j'(t), \quad \text{ for all } j \neq k.
\end{equation}
\end{lemma}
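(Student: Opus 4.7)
The plan is to prove the implication $N_k(t) \leq N_k'(t) \Rightarrow N_j(t) \geq N_j'(t)$ for all $j \neq k$ directly, by analyzing the ``non-$k$ sub-trajectory'' of each representation; the reverse implication is then symmetric (swap the roles of $\mathcal{D}^*_\infty$ and ${\mathcal{D}'}^*_\infty$), and combining the two gives the equality case.

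For each representation, order the times at which arm $k$ is \emph{not} pulled: let $T_1 < T_2 < \cdots$ denote these global times in $\mathcal{D}^*_\infty$ and $T_1' < T_2' < \cdots$ in ${\mathcal{D}'}^*_\infty$, and set $Z_i := A_{T_i} \neq k$ and $Z_i' := A_{T_i'}' \neq k$ for the corresponding arms pulled. The IIO condition writes $Z_i = g_{T_i, k}(\mathcal{D}^{(-k)}_{T_i-1})$, where the argument is built entirely from non-$k$ observations together with the shared external randomness and uses no entry of column $k$ of the tabular representation.

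The key step I would establish is that $Z_i = Z_i'$ for every $i \leq \min(n, n')$, where $n := \sum_{j \neq k} N_j(t)$ and $n' := \sum_{j \neq k} N_j'(t)$. I would prove this by induction on $i$. At $i=1$, no non-$k$ observation has yet been accrued in either representation, so the non-$k$ part of the input to $g$ is empty in both cases, while the remaining inputs depend only on data (random seeds and the shared non-$k$ tabular columns) that is identical across the two representations; hence $Z_1 = Z_1'$. For the inductive step, once $Z_1, \ldots, Z_{i-1}$ coincide, the first $i-1$ non-$k$ observations coincide too (they are read off the same positions of the shared non-$k$ columns in the same order), so the inputs to $g$ at the $i$-th non-$k$ pull agree in both representations and $Z_i = Z_i'$. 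Granted this, the lemma is immediate: the hypothesis $N_k(t) \leq N_k'(t)$ together with the identity $\sum_j N_j(t) = t = \sum_j N_j'(t)$ gives $n \geq n'$, so the non-$k$ sub-trajectory of $\mathcal{D}^*_\infty$ extends that of ${\mathcal{D}'}^*_\infty$ as an initial segment, and counting occurrences of each non-$k$ arm $j$ yields $N_j(t) = \#\{i \leq n : Z_i = j\} \geq \#\{i \leq n' : Z_i' = j\} = N_j'(t)$.

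The main obstacle is the inductive step: the two invocations of $g$ formally carry different time subscripts ($T_i$ versus $T_i'$) and the $\mathcal{D}^{(-k)}$ inputs include random seeds up to different times, since arm $k$ may have been pulled a different number of times before the $i$-th non-$k$ pull. Closing this gap requires that one interpret IIO substantively, namely that the non-$k$ sub-trajectory evolves as an autonomous process driven only by the non-$k$ observations and the shared randomness — the identity of the $i$-th non-$k$ arm does not depend separately on how many arm-$k$ pulls were interleaved before it. This is precisely the spirit in which IIO is used in the proof of Theorem~1 of \citet{nie2018adaptively}. Once the autonomy is justified, the counting argument in the final step is routine.
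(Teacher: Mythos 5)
Your proposal is correct and follows essentially the same route as the paper's proof: you align the sub-sequences of times at which arm $k$ is \emph{not} pulled in the two representations, argue via IIO that the pulled arms agree along the common prefix (the paper's claim $A_{s_u}=A_{s_u'}$ for $u\leq t-N_k'(t)$), and then conclude by counting, using $N_k(t)\leq N_k'(t)$ to see that the non-$k$ trajectory under $\D_\infty^*$ extends that under $\D_\infty^{*'}$. The ``obstacle'' you flag (different global time indices and seed sets at the $i$-th non-$k$ pull) is treated at the same level of detail in the paper, which likewise invokes IIO directly for the prefix-agreement step.
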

\begin{proof}[Proof of Lemma~\ref{lemma::cor_of_IIO}] It is enough to prove the first statement. We follow the logic in the proof of Property~$1$ in \citet{nie2018adaptively}. If $N_k(t) = t$ or $N_k'(t) = t$ then the claimed statement holds trivially since $N_j(t) + N_k(t) \leq t$ and $N_j'(t) + N_k'(t) \leq t$ for all $j \neq k$. Therefore, for the rest of the proof, we assume $N_k(t) \leq N_k'(t) < t$.

For each $t$, define $s_1 <\dots <s_{t-N_k(t)}$ to be the sequence of times at which arm $k$ was \emph{not} sampled before time $t$ under $\D_\infty^*$. Similarly, let $s_1' <\dots <s_{t-N_k(t)}'$ be the sequence of times at which arm $k$ was \emph{not} sampled before time $t$ under $\D_\infty^{*'}$. From the IIO condition and the assumption that $\D_\infty^*$ and $\D_\infty^{*'}$ agree with  each other  except in their $k$-th column, we have 
\begin{equation} \label{sublem::cor_of_IIO}
A_{s_u} = A_{s_u'},~~\text{ for all } u \in \{1,\dots, t-N_k'(t)\},  
\end{equation}
which implies that
\[
N_j'(t) = N_j'(s'_{t-N_k'(t)}) = N_j(s_{t-N_k'(t)}) \leq N_j(s_{t-N_k(t)}) = N_j(t),
\]
where the first and the last identities stem from the definition of $s$ and $s'$, the second identity is due to  \eqref{sublem::cor_of_IIO}, and the inequality follows from the assumption that $N_k(t) \leq N_k'(t)$ along with the fact that $u \mapsto s_u$ and $s \mapsto N_j(s)$ are increasing.
\end{proof}

\begin{proof}[Proof of Fact~\ref{claim::exploit_imply_optimistic}]
Let us fix an arm $k$ and a deterministic stopping time $T$, and a time $t \leq T$, as required by Exploit and IIO conditions. The arguments below are  inspired by case~1 in the proof of Theorem~1 in \citet{nie2018adaptively}.

Let $X_{i,k}^{*'}$ be an independent copy of $X_{i,k}^*$ and define $X_{\infty}^{*'}$ as a $\mathbb{N} \times K$ table which equals $X_{\infty}^{*}$ on all entries except the $(i,k)$-th entry, which contains $X_{i,k}^{*'}$. Let $\D_{\infty}^{*'} =  X_{\infty}^{*'} \cup \{W_{-1}, W_0, \dots\}$  denote the corresponding dataset, which only differs from $\D^*_\infty$ in one element. Let $N_k(T)$ and $N_k'(T)$ be numbers of draws from arm $k$ up to time $T$ based on $\D_\infty^*$ and $\D_\infty^{*'}$ respectively. Also for each $t \leq T$, let $A_t$ and $A_t'$ be sampled arms based on $\D_\infty^*$ and $\D_\infty^{*'}$ respectively. 

To prove the claim, it is enough to show that if $X_{i,k}^* \leq X_{i,k}^{*'} $ then $N_k(T) \leq N_k'(T)$ under Exploit and IIO conditions. Suppose, for the sake of deriving a contradiction, that there exist $i \in \mathbb{N}$ and $k \in [K]$ such that $X_{i,k}^* \leq X_{i,k}^{*'} $ but $N_k(T) > N_k'(T)$. Note that since $A_s$ and $A_s'$ are functions of the history up to time $s-1$, we know that $A_s = A_s'$ for all $s \leq t$, where $t$ is defined as $t = \min\left\{s \geq 1 : N_k(s) = N_k'(s) = i\right\}$. If $t \geq T$, we have that $N_k(T) = N_k(t) = N_k'(t) =N_k'(T)$, which contradicts our assumption. Hence, we may assume $t < T$ for the rest of the proof.

Define $s_0 := \min\left\{s \geq 1 : N_k(s) > N_k'(s)\right\}$. From the definition of $s_0$, we know that $N_k(s_0-1) = N_k'(s_0-1)$. Since $\D_\infty^*$ and $\D_\infty^{*'}$ are identical except for their $(i,k)$-th entry, by Lemma~\ref{lemma::cor_of_IIO}, we have that $N_j(s_0-1) = N_j'(s_0-1)$ for all $j$, which also implies that $\D_{s_0-1}$ and $\D_{s_0 -1}'$ are identical except for the $N_k(t)$-th observation from arm $k$. Therefore, the sample mean from arm $k$ up to time $s_0 - 1$ under $\D_{s_0-1}'$ is larger than the one under $\D_{s_0-1}$.

 Then, by the Exploit condition, $A_{s_0} = k$ implies that $A_{s_0}'=k$. This contradicts the assumption that $N_k(s_0) > N_k'(s_0)$. Therefore, if $X_{i,k}^* \leq X_{i,k}^{*'}$ then $N_k(T)$ must be less than or equal to $N_k'(T)$. Since it holds for any $i \in \mathbb{N}$, $k \in [K]$ and $T$, the sampling strategy is optimistic, proving our claim that Exploit and IIO conditions are jointly a special case of an optimistic sampling rule.
\end{proof}

\subsection{Sufficient conditions for Thompson sampling to be optimistic}\label{Appen::subSec::suff_cond_TS}

In the previous subsection~\ref{Appen::subSec::proof_claim}, we show that Exploit and IIO conditions are jointly a sufficient condition for a sampling rule to be optimistic. In this subsection, we present a sufficient condition for Thompson sampling to satisfy both conditions, and thus to be optimistic.

For each $k$, let $\theta_k$ be the parameter of the distribution of arm $k$, and let $\mu_k = \mu(\theta_k)$. If we use an independent prior $\pi$ on $\theta := (\theta_1, \dots, \theta_K)$, it can be easily shown that posterior distributions of $\theta$ and $\mu(\theta) := (\mu(\theta_1), \dots, \mu(\theta_K))$ are also coordinate-wise independent conditionally on the data. Therefore, the IIO condition is trivially satisfied for the Thompson sampling algorithms. However, it is difficult to check whether the Exploit condition is satisfied because there is no closed form for $\pi(k = \argmax_{j\in [K]} \mu(\theta_j) | \D_t)$ in general. 

One way to detour this issue is to study whether there exists a posterior sampling method such that the following statistically equivalent sampling algorithm satisfies the Exploit condition.
	\[
	\nu_t(k) = \begin{cases}
	1 & \text{if } k = \argmax_{j \in [K]} \mu_j(\theta_{j,t-1})\\
	0 &  \text{otherwise, }
	\end{cases}
	\]
	where $\theta_{j, t-1}$ is a draw from the posterior distribution $\pi(\theta_j | \D_{t-1})$ at time $t-1$. If there exists such sampling method, we know that the sample mean from this Thompson sampling is negatively biased for any fixed $k$ and $T$. With a slight abuse of notation, we say the Thompson sampling is optimistic in this case. 
	
	For example, in Appendix~\ref{Appen::subSec::proof_claim}, we show that Thompson sampling under Gaussian arm and Gaussian prior is optimistic by using a standard Gaussian posterior sampling method described in Appendix~\ref{Appen::subSec::description}. Similarly, for the 
	Bernoulli arm with parameters $\{p_k\}_{k=1}^K$ and beta prior with non-negative integer parameters $(n,m)$ case, we can check that the corresponding Thompson sampling is optimistic using the equivalent  optimistic sampling rule
	\[
	\nu_t(k) = \begin{cases}
	1 & \text{if } k = \argmax_{j \in [K]} \frac{a_{j, t-1}}{a_{j, t-1} + b_{j, t-1}}\\
	0 &  \text{otherwise, }
	\end{cases}
	\]
	where 	$a_{j, t-1} = -\sum_{i=1}^{n + S_k(t-1)} \log U_{i,k}$, $b_{j, t-1} = -\sum_{i=1}^{m + N_k(t-1) - S_k(t-1)} \log W_{i,k}$ and each $U_{i,k}$ and $W_{i,k}$ are independent draws from $U(0,1)$. 

    In general, we have the following sufficient condition for the Thompson sampling to be optimistic. 
    
    \begin{corollary} \label{cor::sufficient_TS}
    Suppose the distributions of the arms belong to a one-dimensional exponential family with density $p_\eta (x) = \exp\{\eta T(x) - A(\eta)\}$ with respect to some dominating measure $\lambda$ and with $\eta \in \mathrm{E}$. Let $\pi$ be a conjugate prior on $\eta$ with a density proportional to $\exp\{\tau \eta \ - n_0A(\eta)\}$. If $\pi(\eta \leq x \mid \tau, n_0)$ is a decreasing function of $\tau$ for any given $x$ and $n_0$, and if $\eta \mapsto \mu(\eta)$ and $x \mapsto T(x)$ are both increasing or decreasing mappings, then Thompson sampling is optimistic.
    \end{corollary}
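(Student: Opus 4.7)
The plan is to show that, under the stated conditions, Thompson sampling admits an equivalent representation satisfying the Exploit and IIO conditions of \citet{nie2018adaptively}, so that Fact~\ref{claim::exploit_imply_optimistic} delivers optimism. As indicated earlier in this subsection, the Thompson sampling rule is statistically equivalent to the two-step procedure in which, at each step, we draw $\eta_{j, t-1}$ from the posterior $\pi(\eta_j \mid \D_{t-1})$ independently for each arm $j$, then set $A_t = \argmax_j \mu(\eta_{j, t-1})$. The central device for comparing Thompson sampling on two neighboring tabular datasets is an inverse-CDF coupling: implement each posterior draw as $\eta_{j, t-1} = F_{j, t-1}^{-1}(U_{j, t})$, where $F_{j, t-1}$ is the posterior CDF of $\eta_j$ given $\D_{t-1}$ and the seeds $U_{j, t} \sim U[0,1]$ are independent and can be absorbed into the exogenous randomness $W_{t-1}$ of the paper's setup.

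Verifying IIO is immediate. Because the conjugate prior $\pi$ is a product of independent priors on $\eta_1, \ldots, \eta_K$, the posterior factorizes as $\pi(\eta_1, \ldots, \eta_K \mid \D_{t-1}) = \prod_j \pi(\eta_j \mid \D_{t-1}^{(j)})$, so each $\eta_{j, t-1}$ depends only on arm $j$'s data and the seed $U_{j, t}$. Whenever $A_t \neq k$, the arm actually chosen is $\argmax_{j \neq k} \mu(\eta_{j, t-1})$, which is measurable with respect to $\D_{t-1}^{(-k)}$, matching the IIO definition.

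For Exploit, inspection of the proof of Fact~\ref{claim::exploit_imply_optimistic} reveals that the condition is only ever applied to a single-entry modification: two datasets $\D_{t-1}$ and $\D'_{t-1}$ that agree everywhere except that $X_{i,k}^{*'} \geq X_{i,k}^*$, with $N_k(t-1)$ the same in both. Under the conjugate update, the posterior of $\eta_k$ has natural hyperparameters $\tau_k^{\mathrm{post}} = \tau + \sum_l T(X_{l,k}^*)$ and $n_0^{\mathrm{post}} = n_0 + N_k(t-1)$, so the swap leaves $n_0^{\mathrm{post}}$ unchanged and shifts $\tau_k^{\mathrm{post}}$ in the direction dictated by $T$'s monotonicity. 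The hypothesis that $\pi(\eta \leq x \mid \tau, n_0)$ is decreasing in $\tau$ then implies that $F_{k, t-1}(x)$ moves pointwise opposite to $\tau_k^{\mathrm{post}}$, so by the standard inverse-CDF relation $\eta_{k, t-1} = F_{k, t-1}^{-1}(U_{k, t})$ moves in the same direction as $T$. Composing with the monotonic $\mu$ (in the same direction as $T$ by hypothesis) yields that $\mu(\eta_{k, t-1})$ weakly increases after the swap in both the ``both increasing'' and ``both decreasing'' cases. Since $\mu(\eta_{j, t-1})$ for $j \neq k$ are unchanged under the coupling (their posteriors depend on unchanged data from the other arms), $A_t = k$ on the original dataset forces $A_t = k$ on the modified one, which is exactly the Exploit implication needed. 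Together with IIO, Fact~\ref{claim::exploit_imply_optimistic} delivers optimism.

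The main subtlety is the two-sign bookkeeping: when $T$ and $\mu$ are both decreasing, increasing $X_{i,k}^*$ shifts $\tau_k^{\mathrm{post}}$ down, shifts $\eta_{k, t-1}$ down via the coupling, and then the decreasing $\mu$ flips this back into an increase of $\mu(\eta_{k, t-1})$, so arm $k$ is again more likely to be chosen. Apart from tracking signs through the three monotonicities and checking that the posterior parameterization is the right one to plug into the hypothesis on $\pi(\eta \leq x \mid \tau, n_0)$, the argument is a clean composition of the three monotonicity hypotheses.
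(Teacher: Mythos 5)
Your proposal is correct and follows essentially the same route as the paper: represent Thompson sampling via the inverse-CDF coupling $\eta_{j,t-1}=F^{-1}(U_{j,t-1}\mid S_j^T(t-1),N_j(t-1))$, use the hypothesis that the posterior CDF is decreasing in its natural parameter together with the common monotonicity direction of $T$ and $\mu$ to get the single-swap (Exploit-type) monotonicity, note IIO holds trivially by posterior independence across arms, and conclude optimism via Fact~\ref{claim::exploit_imply_optimistic}. Your explicit handling of the single-entry-swap form of Exploit and the two-sign bookkeeping only makes precise what the paper's terser proof asserts.
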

    \begin{proof}
    Fix a an arm $k \in [K]$. By the conjugacy, the posterior distribution for $\eta_k$ given the data up to time $t$ is given by 
    \[
    \pi\left(\eta_k | \D_{t}\right)\propto\exp\left\{\left(\tau + S^T_k(t)\right) \eta_k - \left(n_0 + N_k(t)\right)A(\eta_k)\right\},
    \]
    where $S^T_k(t) := \sum_{s = 1}^t \mathbbm{1}(A_s = k)T(Y_s)$.  Let $F\left(x | S^T_k(t), N_k(t)\right) := \pi\left(\eta_k \leq x | \D_{t}\right)$. From the condition on the prior, we know that $S_k^T(t)~\mapsto~F\left(x | S_k^T(t), N_k(t)\right)$ is a decreasing mapping for any given $x, N_k(t)$ and indices $i, k$ and $t$. Therefore $S_k^T(t)~\mapsto~F^{-1}\left(y | S_k^T(t), N_k(t)\right)$ is an increasing mapping for any given $y, N_k(t)$ and indices $i, k$ and $t$. Now, we can check that the Thompson sampling is equivalent to the following sampling rule.
    \[
	\nu_t(k) = \begin{cases}
	1 & \text{if } k = \argmax_{j \in [K]} \mu\left(\eta_{j, t-1}\right)\\
	0 &  \text{otherwise, }
	\end{cases}
	\]
	where $\eta_{j, t-1} := F^{-1}\left(U_{j, t-1} | S_k^T(t-1), N_k(t-1)\right)$ and each $U_{j, t-1}$ is an independent draw from $U(0,1)$. Since $\eta \mapsto \mu(\eta)$ and $x \mapsto T(x)$ are both increasing (or decreasing), this sampling rule and the corresponding Thompson sampling is optimistic.
    \end{proof}
    We can check many commonly used one-dimensional exponential family arms with its conjugate prior satisfying the condition in Corollary~\ref{cor::sufficient_TS} which includes Gaussian distributions with a Gaussian prior, Bernoulli distributions with a beta prior, Poisson distributions with a gamma prior and exponential distributions with a gamma prior

\subsection{Intuitions for the sign of the bias under each optimistic sampling and stopping}

\label{sec::intuition}
Under an optimistic sampling rule with a fixed stopping time and a fixed target, \citet{xu2013estimation} and \citet{nie2018adaptively} provided some intuitions as to why the sample mean is negatively biased. In this subsection, we presents a similar intuitive explanation for the negative bias of the sample mean due to adaptive sampling. We also offer some intuition in order to explain the positive bias stemming from optimistic stopping rules in the one-armed case.  

For an optimistic sampling rule with a fixed stopping time, assume for simplicity that we have a fixed target arm with a symmetric distribution around its true mean. Consider two equally possible realization of the experiment up to time $t$. In one realization, the sample mean at time $t$ happens to be larger than its true mean. On the other hand, in the other scenario, the sample mean at time $t$ happens to be smaller than its true mean. In the first case, the optimistic sampling rule will draw samples more often from the target arm, and thus the sample mean will regress more easily to its true mean. In contrast, in the other case, the optimistic sampling rule will draw samples less often and thus the sample mean is less likely to regress to its true mean due to the smaller sample size. Since these two realizations are equally likely, on average, the sample mean is negatively biased. See Figure~\ref{fig::intution_sampling} for an illustration of this intuition.

For optimistic stopping in the one-armed case, consider the  stopping rule that terminates the experiment when the sample mean crosses a predetermined upper boundary. See Figure~\ref{fig::intution_stopping} for an illustrative stopping boundary. As we did for the sampling case, we again assume that the distribution of the arm is symmetric around its true mean. As before, consider two equally possible realizations. In one realization, the sample mean at early times happens to be larger than the true mean. On the other hand, in the other realization, the sample means at early times is smaller than its true mean. In the first realization, the sample mean will cross the upper stopping boundary at an earlier time and thus the sample mean at the crossing time will be large. In contrast, in the other realization, the sample mean will cross the boundary at a later time and thus the optimistic stopping rule ensures that we will draw more samples in this realization and thus the sample mean is more likely to regress to its true mean due to the larger sample size. Since these two realizations are equally likely, on average, the sample mean is positively biased. See Figure~\ref{fig::intution_stopping} for an illustration of this intuition.

\begin{figure}[!h]
\begin{center}
\includegraphics[scale=.35]{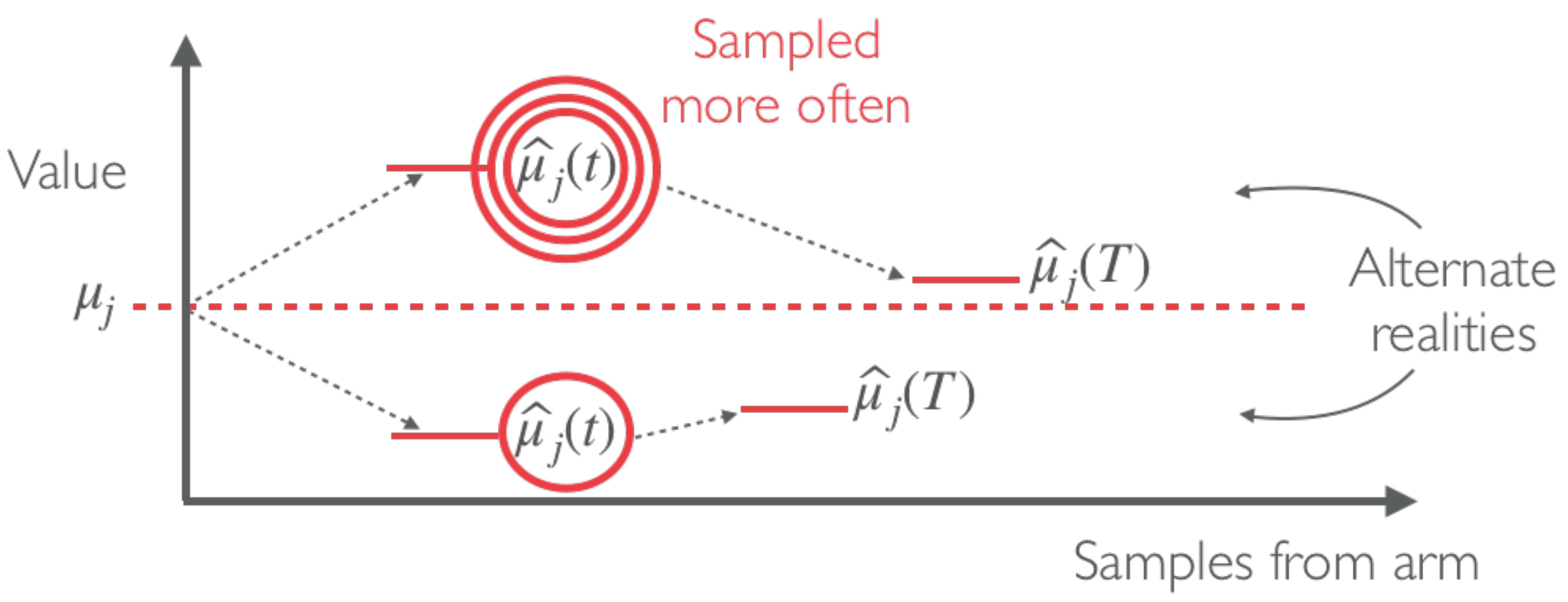}
\end{center}
	\caption{\em An illustration of the intuition for why optimistic sampling results in negative bias. }
	\label{fig::intution_sampling}
\end{figure}

\begin{figure}[!h]
\begin{center}
\includegraphics[scale=.35]{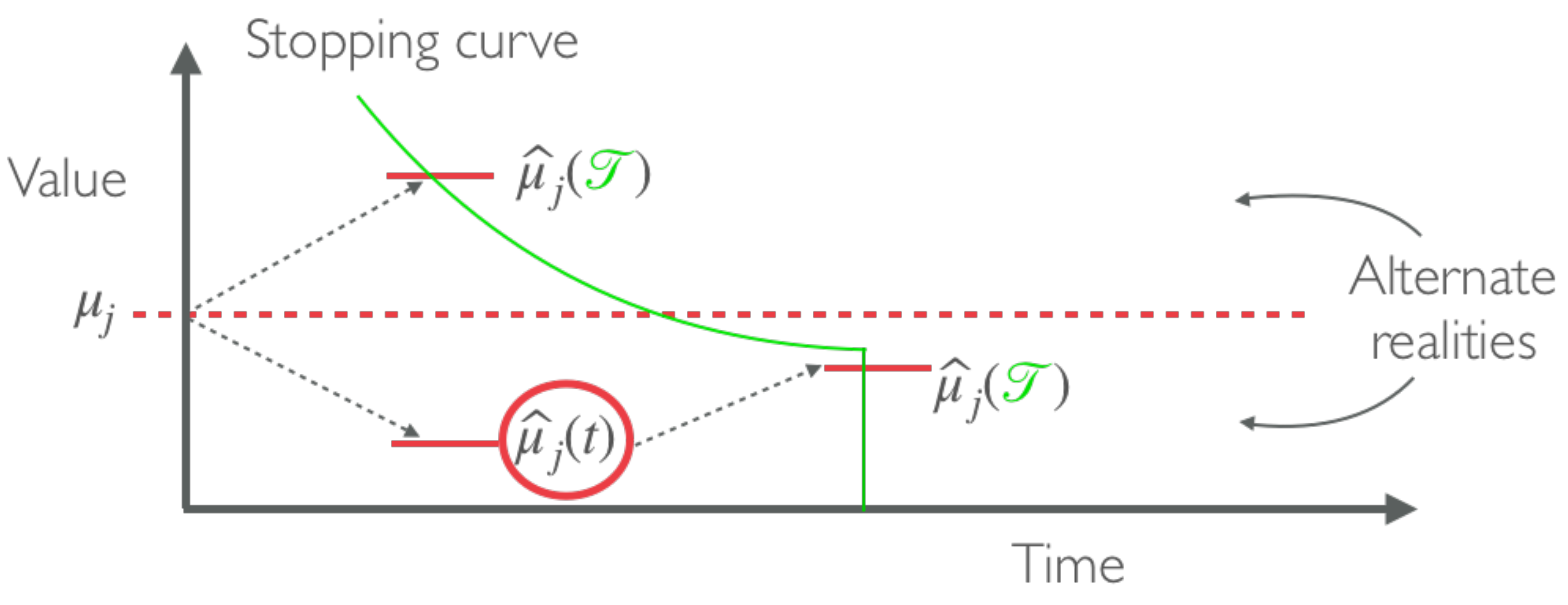}
\end{center}
	\caption{\em An illustration of the intuition for why optimistic stopping results in positive bias.  }
	\label{fig::intution_stopping}
\end{figure}

\section{Proofs}
\subsection{Proof of Theorem~\ref{thm::bias_sign} (the paper's central theorem on the sign of the bias)}
\label{sec:Proof_of_bias_sign}
% \textcolor{red}{JH : Entire Appendix B is rewritten, thus I skipped to put colors on the updated proof.}

Suppose that the data collecting strategy is monotonically decreasing for the $k$-th distribution. Then, we will first show that, for any time $t \in \mathbb{N}$, we have
 \begin{equation}\label{eq::monotone_increasing_ineq}
 	    \mathbb{E}\left[ \frac{\mathbbm{1}\left(\kappa = k\right)}{ N_k(\Tau) }\mathbbm{1}\left(A_t = k\right)\left(Y_t - \mu_k\right)  \mid \mathcal{F}_{t-1} \right]
	\leq 0.
 \end{equation}
  Similarly, if the data collecting strategy is monotonically increasing, the inequality is reversed. 
 It is understood that if $t > \Tau$, then $\mathbbm{1}\left(A_t = k\right)=0$ for all $k$, making the above claim trivially true, and hence below we implictly focus on $t \leq \Tau$.
%  for the $k$-th distribution, we have that 
%  \begin{equation}\label{eq::monotone_decreasing_ineq}
%  	    \mathbb{E}\left[ \frac{\mathbbm{1}\left(\kappa = k\right) }{ N_k(\Tau) }\mathbbm{1}\left(A_t = k\right)\left(Y_t - \mu_k\right)  \mid \mathcal{F}_{t-1} \right]
% 	\geq 0.
%  \end{equation}

\begin{proof}[Proof of inequality~\eqref{eq::monotone_increasing_ineq}]
Note that the LHS of inequality~\eqref{eq::monotone_increasing_ineq} can be rewritten as
 \begin{align*}
 	 &\mathbb{E}\left[ \frac{\mathbbm{1}\left(\kappa = k\right)}{ N_k(\Tau) }\mathbbm{1}\left(A_t = k\right)\left(Y_t - \mu_k\right)  \mid \mathcal{F}_{t-1} \right]\\
 	 &=\mathbb{E}\left[ \frac{\mathbbm{1}\left(\kappa = k\right)}{ N_k(\Tau) }\mathbbm{1}\left(A_t = k\right)\left(X_{N_k(t), k} - \mu_k\right)  \mid \mathcal{F}_{t-1} \right]\\
	&=  \mathbb{E}\left[\sum_{i =1}^t \frac{\mathbbm{1}\left(\kappa = k\right)}{ N_k(\Tau) }\mathbbm{1}\left(A_t = k\right)\mathbbm{1}\left(N_k(t) = i \right)\left(X_{i,k}^* - \mu_k\right)  \mid \mathcal{F}_{t-1} \right] \\
	& = \sum_{i =1}^t \mathbb{E}\left[ \frac{\mathbbm{1}\left(\kappa = k\right)}{ N_k(\Tau) }\mathbbm{1}\left(A_t = k\right)\mathbbm{1}\left(N_k(t) = i \right)\left(X_{i,k}^* - \mu_k\right)  \mid \mathcal{F}_{t-1} \right].
	\end{align*}
Therefore, it is enough to show the following inequality holds:
\begin{equation} \label{eq::WTS_monotone_ineq}
     \mathbb{E}\left[ \frac{\mathbbm{1}\left(\kappa = k\right)}{ N_k(\Tau) }\mathbbm{1}\left(A_t = k\right)\mathbbm{1}\left(N_k(t) = i \right)\left(X_{i,k}^* - \mu_k\right)  \mid \mathcal{F}_{t-1} \right]
	\leq 0,
\end{equation}
for each $t, k, i$. Recall that $\D_{\infty}^* = X_{\infty}^* \cup \{W_{-1}, W_0, \dots\}$ is a hypothetical dataset containing all possible independent samples from the distributions, external random sources and random seeds where the $(i,k)$-th entry of the table $X_{\infty}^*$ is a draw $X_{i,k}^*$  from $P_k$ independent of every other entry of $ X_{\infty}^*$ and of the external random sources and the random seeds $\{W_{-1}, W_0, \dots\}$. Let $X_{i,k}^{*'}$ be an independent copy of $X_{i,k}^*$ and define $X_{\infty}^{*'}$ as a $\mathbb{N} \times K$ table that equals $X_{\infty}^{*}$ on all entries except the $(i,k)$-th entry, which contains $X_{i,k}^{*'}$. Let $\D_{\infty}^{*'} =  X_{\infty}^{*'} \cup \{W_{-1}, W_0, \dots\}$  denote the corresponding dataset, which only differs from $\D^*_\infty$ in one element. Note that, for each $t$, we have
\begin{align*}
\mathcal{F}_{t} &= \sigma\left(\{W_{-1}, W_{0}, Y_1, W_1, \dots, Y_t, W_t\}\right)\\
&= \sigma\left( \bigcup_{k=1}^K \{X_{N_k(s),k}^*\}_{s=1}^t \cup \{W_{s}\}_{s=-1}^{t}\right),   
%& = \sigma\left( \bigcup_{k=1}^K  \bigcup_{i=1}^\infty  \{N_k(s), \mathbbm{1}\left(N_k(s) = i\right)X_{i ,k}^*\}_{s = 1}^t \cup \{W_{s}\}_{s=-1}^{t-1}\right),
\end{align*}
because there is an one-to-one correspondence between sets of random variables generating $\sigma$-algebras. Therefore $X_{i,k}^{*'}$ is independent to $\mathcal{F}_t$ for any choice of $i,k$ and $t$.

For any $i, k$ and $t$, since $\mathbbm{1}\left(A_t = k\right)$  and  $\mathbbm{1}\left(N_k(t) = i\right)$ are not functions of either $X_{i,k}^*$ or $X_{i,k}^{*'}$, if the data collecting strategy is monotonically decreasing, we have that
%\begin{equation} \label{eq::association_ineq_basic}
\[
      \mathbbm{1}\left(A_t = k\right) \mathbbm{1}\left(N_k(t) = i\right)\left(\frac{g_k(\D_{\infty}^*)}{f_k(\D_{\infty}^*) } - \frac{g_k(\D_{\infty}^{*'
    })}{f_k(\D_{\infty}^{*'})} \right)\left(\left(X_{i,k}^{*} - \mu_k \right)  -   \left(X_{i,k}^{*'} - \mu_k \right) \right)  \leq 0 .
\]
    Rearranging, we obtain that
    \begin{align*}
    &\mathbbm{1}\left(A_t = k\right)\mathbbm{1}\left(N_k(t) = i\right) \left(\frac{g_k(\D_{\infty}^*)}{f_k(\D_{\infty}^*)} \left(X_{i,k}^{*} - \mu_k\right) + \frac{g_k(\D_{\infty}^{*'}) }{f_k(\D_{\infty}^{*'}) } \left(X_{i,k}^{*'}-\mu_k\right)\right)\\
    & \leq \mathbbm{1}\left(A_t = k\right)\mathbbm{1}\left(N_k(t) = i\right)\left(\frac{g_k(\D_{\infty}^{*'}) }{f_k(\D_{\infty}^{*'}) }\left( X_{i,k}^{*} - \mu_k\right) + \frac{g_k(\D_{\infty}^*)}{f_k(\D_{\infty}^*) }\left(X_{i,k}^{*'} - \mu_k \right)\right) .
\end{align*}
%\end{equation}
% Now, let $N_k'(\Tau) := \sum_{s\geq 1} f_{t,k}(\D^{*'}_{s-1})  $ be the number of draws from $k$-th distribution at stopping time under $\D_s^{*'}$. Then, we can re-write the inequality~\eqref{eq::association_ineq_basic} as
% \[
% 0\geq \left(\frac{1}{N_k(\Tau) } - \frac{1}{N_k'(\Tau)} \right) \left(  \mathbbm{1}(A_t = k) X_{i,k}^{*}  -  \mathbbm{1}(A_t = k) X_{i,k}^{*}' \right). 
% \]

% Given the sample history up to time $t-1$, $\left(N_k(\Tau), X_{i,k}^{*}\right)$ and $\left(N_k'(\Tau), X_{i,k}^{*}'\right)$ have the same law. Also, we have  $N_k(\Tau) \indep X_{i,k}^{*}'$ and $N_k'(\Tau) \indep X_{i,k}^{*}$. Therefore,
Next, note that $\frac{g_k(\D_{\infty}^*)}{f_k(\D_{\infty}^*)} \left(X_{i,k}^{*} - \mu_k\right)$ and $\frac{g_k(\D_{\infty}^{*'}) }{f_k(\D_{\infty}^{*'}) } \left(X_{i,k}^{*'}-\mu_k\right)$ have the same  distribution and so do $\frac{g_k(\D_{\infty}^{*'}) }{f_k(\D_{\infty}^{*'}) } \left( X_{i,k}^{*} - \mu_k\right)$ and $\frac{g_k(\D_{\infty}^*)}{f_k(\D_{\infty}^*) } \left(X_{i,k}^{*'} - \mu_k \right)$. Therefore, by taking conditional expectation given $\mathcal{F}_{t-1}$ on both sides, we have
	\begin{align}
	    &2\mathbb{E}\left[\mathbbm{1}\left(A_t = k\right)\mathbbm{1}\left(N_k(t) = i\right)\frac{g_k(\D_{\infty}^*)}{f_k(\D_{\infty}^*)}  \left(X_{i,k}^{*} - \mu_k\right) \mid \mathcal{F}_{t-1}\right]\label{eq:ineq}  \\
	    &\leq 2\mathbb{E}\left[\mathbbm{1}\left(A_t = k\right)\mathbbm{1}\left(N_k(t) = i\right)\frac{g_k(\D_{\infty}^*)}{f_k(\D_{\infty}^*) } \left(X_{i,k}^{*'} - \mu_k \right) \mid \mathcal{F}_{t-1}\right] \nonumber \\
	    & = 2\mathbb{E}\left[\mathbbm{1}\left(A_t = k\right)\mathbbm{1}\left(N_k(t) = i\right)\frac{g_k(\D_{\infty}^*)}{f_k(\D_{\infty}^*)} \mid \mathcal{F}_{t-1}\right] \mathbb{E} \left[X_{i,k}^{*'} - \mu_k\right] \nonumber \\
	    & = 0 \nonumber,
	   % & = 2\mathbb{E}\left[\frac{1}{N_k(\Tau) } \mid \mathcal{F}_{t-1}\right] \mathbb{E} \left[\mathbbm{1}(A_t = k) \left(Y_{t} - \mu_k\right) \mid \mathcal{F}_{t-1} \right]. 
	\end{align}
where the first equality comes from the fact $X_{i,k}^{*'}$ is independent of both $\frac{g_k(\D_{\infty}^*)}{f_k(\D_{\infty}^*) }$ and $\mathcal{F}_{t-1}$ and that $\mathbbm{1}\left(A_t = k\right)$ and $\mathbbm{1}\left(N_k(t) = i\right)$ are measurable with respect to $\mathcal{F}_{t-1}$.
By plugging-in the identity $\frac{g_k(\D_{\infty}^*)}{f_k(\D_{\infty}^*)}= \frac{\mathbbm{1}\left(\kappa = k\right)}{N_k(\Tau)}$  into the LHS of \eqref{eq:ineq}, we obtain the inequality~\eqref{eq::WTS_monotone_ineq}, and thus, the inequality~\eqref{eq::monotone_increasing_ineq} as desired.
% The  inequality~\eqref{eq::monotone_decreasing_ineq} under the monotonically decreasing data collecting strategy for the $k$-th distribution can be shown by the same argument. 
\end{proof}

\begin{proof}[Proof of the signs of the covariance and  bias terms, equations  \eqref{eq::sign_of_cov_bias_increasing_each_k} and \eqref{eq::sign_of_cov_bias_decreasing_each_k}]
Suppose that the data collection strategy is monotonically increasing. Consider any arm $k$ such that $\mathbb{P}(\kappa = k) >0$. To prove equation~\eqref{eq::sign_of_cov_bias_increasing_each_k}, it is enough to show that $\mathbb{E}\left[\left(\hat{\mu}_{\kappa} - \mu_{\kappa}\right)\mathbbm{1}(\kappa = k)\right] \leq 0$. For each $t \geq 0$, define a process that is adapted to the natural filtration $\{\mathcal{F}_t\}_{t \geq 0}$ such that $L(0) = 0$ and
    \begin{equation}
	L(t) := \mathbb{E} \left[\frac{S_k(t) - \mu_k N_k(t)}{N_k (\Tau)} \mathbbm{1}(\kappa = k) \mid \mathcal{F}_t\right],~~\forall t \geq 1.
	\end{equation}
Note that the theorem requires us to show that $\E[L_\Tau] \leq 0$. We will first show that 
\begin{equation}\label{eq:super-mg}
\text{
$\{L(t)\}_{t \geq 0}$ is a super-martingale with respect to $\left\{\mathcal{F}_{t}\right\}_{t\geq 0}$.}
\end{equation}
First note that using inequality~\eqref{eq::monotone_increasing_ineq}, we have
	\begin{align*}
\mathbb{E}\left[L(1) \mid \mathcal{F}_0 \right] &= \mathbb{E}\left[ \frac{\mathbbm{1}\left(\kappa = k\right)}{N_k(\Tau)} \mathbbm{1}\left(A_1 = k\right)\left(Y_1 - \mu_k\right) \mid \mathcal{F}_0 \right]  \leq 0 = L(0).
	\end{align*}
Next, for all $t \geq 1$, again using inequality~\eqref{eq::monotone_increasing_ineq}, we have
	\begin{align*}
	\mathbb{E}\left[L(t) \mid \mathcal{F}_{t-1} \right] 
	& = L(t-1) +\mathbb{E}\left[ \frac{\mathbbm{1}\left(\kappa = k\right)}{N_k(\Tau)}\mathbbm{1}\left(A_t = k\right) \left(Y_t - \mu_k\right) \mid \mathcal{F}_{t-1} \right]  \\
%     & \leq  L_k(t-1) + \mathbb{E}\left[ \frac{1 }{ N_k(\Tau) } \mid \mathcal{F}_{t-1} \right] \mathbb{E}\left[\mathbbm{1}\left(A_t = k\right)\left(Y_t - \mu_k\right) \mid \mathcal{F}_{t-1} \right]~~\text{(by Claim~\ref{claim::association_ineq}.)}\\
% 	&=  L_k(t-1) + \mathbb{E}\left[ \frac{1}{ N_k(\Tau)} \mid \mathcal{F}_{t-1} \right] \mathbb{E}\left[\mathbbm{1}\left(A_t = k\right) \left(Y_t - \mu_k\right) \right] ~~\text{(since $\mathbbm{1}\left(A_t = k\right) \in \mathcal{F}_{t-1}$.)} \\
	& \leq L(t-1).
	\end{align*}
(Note that since sampling stops at time $\Tau$, it is understood that for $t>\Tau$, we have $\mathbbm{1}(A_t=\kappa)=0$, $S_\kappa(t)=S_\kappa(\Tau)$, $N_{\kappa}(t)=N_\kappa(\Tau)$, $\F_t = \F_\Tau$ and thus $L(t)=L(t-1)=L(\Tau)$, so the above inequality is still valid.)
	This proves claim \eqref{eq:super-mg}. By the optional stopping theorem, we have that
	\[
	\mathbb{E}L(\Tau \wedge t) \leq \mathbb{E}[L(0)] = 0, ~~\forall t \geq 1. 
	\]
	To prove $\mathbb{E}L(\Tau) \leq \mathbb{E}[L(0)]$, we follow the standard proof technique for the optional stopping theorem. To be specific, it is enough show that   $|L(\Tau \wedge t)| \leq U$ for all $t \geq 0$, where $U$ is such that $\mathbb{E}[U]< \infty$. The  result then follows from the dominated convergence theorem. Define $U$ as
    \begin{equation}\label{eq::U}
    U =  \sum_{s=1}^{\Tau} \left|L(s) - L(s-1) \right| 
    =  \sum_{s=1}^{\infty} \left|L(s) - L(s-1) \right| \mathbbm{1}\left( \Tau \geq s  \right).
    \end{equation}
    Clearly, $|L(\Tau \wedge t)| \leq U$ for all $t$. In order to show that $\mathbb{E}[U] < \infty$,  first note that for any $t \geq 1$, we have
    \begin{equation} \label{eq::integrability_condition_for_S_k}
	\begin{aligned}
	\mathbb{E}\left[\left|L(t+1) - L(t) \right|\mid \F_t\right] 
	& =  \mathbb{E}\left[ \frac{\mathbbm{1}\left(\kappa = k\right)}{N_k(\Tau)}\mathbbm{1}\left(A_{t+1} = k\right) \left|Y_{t+1} - \mu_k\right| \mid \mathcal{F}_{t} \right]  \\
	&\leq \mathbb{E}\left[ \mathbbm{1}(A_{t+1} = k)  \left|Y_{t+1} -\mu_k\right| \mid \F_t\right] 	 \\
	&=   \mathbbm{1}(A_{t+1} = k)  \mathbb{E}\left[ \left|Y_{t+1} -\mu_k\right| \mid \F_t\right]	 \\
	&=\mathbbm{1}(A_{t+1} = k) \int |x-\mu_k| \mathrm{d}P_k(x) \\
	&:=  c_k  \mathbbm{1}(A_{t+1} = k) , 
	\end{aligned}	    
	\end{equation}
    where the first inequality comes from the assumption $N_k(\Tau) \geq 1$ for all $k$ with $\mathbb{P}(\kappa =k) > 0$, and the following equality holds because $\mathbbm{1}(A_{t+1} = k) \in \mathcal{F}_t$. The third equality stems from the observation that, on the event $(A_{t+1} = k)$, $Y_{t+1} \sim P_k$ and it is independent of the previous history.  Therefore, we obtain that 
    \begin{align*}
        \mathbb{E}[U] &=   \sum_{s=1}^{\infty} \mathbb{E}\left[\mathbb{E} \left[ \left|L(s) - L(s-1) \right| \mathbbm{1}\left( \Tau \geq s  \right) \mid \mathcal{F}_{s-1} \right]\right] \\
        & =    \sum_{s=1}^{\infty} \mathbb{E} \left[\mathbbm{1}\left( \Tau \geq s  \right) \mathbb{E} \left[ \left|L(s) - L(s-1) \right|  \mid \mathcal{F}_{s-1} \right]\right] ~~\text{(since $\mathbbm{1}\left( \Tau \geq s  \right) \in \mathcal{F}_{s-1}$.)}\\
         & \leq   c_k \sum_{s=1}^{\infty} \mathbb{E} \left[\mathbbm{1}\left(A_s = k \right)\mathbbm{1}\left( \Tau \geq s  \right)\right] ~~\text{(by the inequality~\eqref{eq::integrability_condition_for_S_k})}\\
        & =  c_k\mathbb{E} N_{k}(\Tau) < \infty,
    \end{align*}
      where the finiteness of the last term follows from the assumption $\mathbb{E} N_k(\Tau)  < \infty$ for all $k$ with $\mathbb{P}(\kappa = k) > 0$. %and the fact that the cumulant generating function exists in an open neighbor of zero, which implies that $\int |x| \mathrm{d}P_k(x) = c_k < \infty$. 
    By the dominated convergence theorem, we have that
	\begin{align*}
	\mathbb{E}\left[\hat{\mu}_\kappa(\Tau) - \mu_\kappa \mid \kappa = k \right] \mathbb{P}(\kappa = k) &= \mathbb{E}\left[\left(\hat{\mu}_\kappa(\Tau) - \mu_\kappa\right)\mathbbm{1}(\kappa = k) \right]\\
	&= \mathbb{E}\left[L(\Tau)\right] \leq \mathbb{E}[L(0)] = 0,
	\end{align*}
	which implies that $\mathbb{E}\left[\hat{\mu}_\kappa \mid \kappa = k\right] \leq \mu_k$. 
	The inequality \eqref{eq::sign_of_cov_bias_increasing} follows  immediately from this result and the identity 
	\[
		\mathbb{E}\left[\hat{\mu}_\kappa(\Tau) - \mu_\kappa \right]  = \sum_{k: \mathbb{P}(\kappa =k) >0}\mathbb{E}\left[\hat{\mu}_\kappa(\Tau) - \mu_\kappa \mid \kappa = k \right] \mathbb{P}(\kappa = k).
	\]
    Thus, the sample mean at the stopping time $\Tau$ is negatively biased. 
    
    If the data collecting strategy is monotonically increasing, the supermartingale is replaced by a submartingale and the inequalities are reversed. This observation completes the proof.
    % by  inequality~\eqref{eq::monotone_decreasing_ineq}, we can show that $\{L(t)\}_{t\geq 0}$ is a sub-martingale with respect to $\{\mathcal{F}_t\}_{t\geq 0}$. Thus, the same arguments used above yield that
    % 	\[
    % \mathbb{E}\left[\hat{\mu}_\kappa(\Tau) - \mu_\kappa \right] =  \mathbb{E} L(\Tau) \geq \mathbb{E}L_k(0) =0,
    % \]
    % as desired.
    
    Now, suppose each arm has a bounded distribution. without loss of generality, assume there exists a fixed $M > 0$ such that $P_k\left([\mu_k-M, \mu_k+M]\right) = 1$ for all $k \in [K]$. Then for any $t \geq 1$, we have
    \begin{equation} \label{eq::integrability_condition_for_S_k_under_bounded_condition}
        \begin{aligned}
        	\mathbb{E}\left[\left|L(t+1) - L(t) \right|\mid \F_t\right] 
	& =  \mathbb{E}\left[ \frac{\mathbbm{1}\left(\kappa = k\right)}{N_k(\Tau)}\mathbbm{1}\left(A_{t+1} = k\right) \left|Y_{t+1} - \mu_k\right| \mid \mathcal{F}_{t} \right]  \\
	& \leq M \mathbb{E}\left[ \frac{\mathbbm{1}\left(A_{t+1} = k\right)}{N_k(\Tau)} \mid \mathcal{F}_{t} \right]. 
        \end{aligned}
    \end{equation}
    Therefore, we obtain that
    \begin{align*}
        \mathbb{E}[U] &=   \sum_{s=1}^{\infty} \mathbb{E}\left[\mathbb{E} \left[ \left|L(s) - L(s-1) \right| \mathbbm{1}\left( \Tau \geq s  \right) \mid \mathcal{F}_{s-1} \right]\right] \\
        & =    \sum_{s=1}^{\infty} \mathbb{E} \left[\mathbbm{1}\left( \Tau \geq s  \right) \mathbb{E} \left[ \left|L(s) - L(s-1) \right|  \mid \mathcal{F}_{s-1} \right]\right] ~~\text{(since $\mathbbm{1}\left( \Tau \geq s  \right) \in \mathcal{F}_{s-1}$)}\\
         & \leq   M  \sum_{s=1}^{\infty} \mathbb{E} \left[ \frac{\mathbbm{1}\left(A_{s} = k\right)}{N_k(\Tau)} \mathbbm{1}\left( \Tau \geq s  \right)\right] ~~\text{(by the inequality~\eqref{eq::integrability_condition_for_S_k_under_bounded_condition})}\\
         & =  M  < \infty~~\text{(by the definition of $N_k(\Tau)$)},
    \end{align*}
    which implies that if each arm has a bounded distribution,  we can determine the sign of the bias of the sample mean  at the stopping time $\Tau$ without assuming $\mathbb{E}N_k(\Tau) < \infty$ for all $k$ with $\mathbb{P}(\kappa = k) > 0$. 
 
\end{proof}

\paragraph{About Remark~\ref{remark::moment_condition}.}
In our recent work \citep{shin2019bias}, we showed that if arm $k$ has a finite $p$-th moment for a fixed $p > 2$, the following bound on the normalized $\ell_2$ risk of the sample mean holds:
\begin{equation}\label{eq:risk.bound}
    \mathbb{E}\left[\frac{N_{k}(\Tau)}{\log N_{k}(\Tau)} \left(\hat{\mu}_k (\Tau) - \mu_k\right)^2\right] < \infty,
\end{equation}
 provided that $N_k(\Tau) \geq 3$. In this case,
we can show that $\mathbb{E}[U] < \infty$ without assuming $\mathbb{E}N_k(\Tau) <\infty$, where $U$ is defined in \eqref{eq::U}. For each $k$, set $c_k := \int|x-\mu_k|\mathrm{d}P_k(x)$. Let $\hat{c}_k(\Tau)$ be the sample mean estimator of $c_k$ at the stopping time $\Tau$. Then, we have
\begin{align*}
      \mathbb{E}[U] &=   \sum_{s=1}^{\infty} \mathbb{E}\left[\mathbb{E} \left[ \left|L(s) - L(s-1) \right| \mathbbm{1}\left( \Tau \geq s  \right) \mid \mathcal{F}_{s-1} \right]\right] \\
         & =  \sum_{s=1}^{\infty}\mathbb{E} \left[ \frac{\mathbbm{1}\left(\kappa = k\right)}{N_k(\Tau)}\mathbbm{1}\left(A_{s} = k\right)\left|Y_s - \mu_k\right| \mathbbm{1}\left( \Tau \geq s  \right)\right]\\
         & \leq \mathbb{E} \left[\sum_{s=1}^{\infty} \frac{\mathbbm{1}\left(A_{s} = k\right)}{N_k(\Tau)}\left|Y_s - \mu_k\right| \mathbbm{1}\left( \Tau \geq s  \right)\right]\\
         &:= \mathbb{E}\left[\hat{c}_{k}(\Tau)\right] \\
        & \leq \mathbb{E}\left|\hat{c}_{k}(\Tau) - c_k\right| +c_k \\
        & \leq \mathbb{E}\left[\sqrt{\frac{N_{k}(\Tau)}{\log N_{k}(\Tau)}} \left|\hat{c}_k (\Tau) - c_k\right|\right]+ c_k \\
        & \leq \sqrt{\mathbb{E}\left[\frac{N_{k}(\Tau)}{\log N_{k}(\Tau)} \left(\hat{c}_k (\Tau) - c_k\right)^2\right]}  + c_k < \infty,
\end{align*}
where in the last bound we have used \eqref{eq:risk.bound}.
 Thus, if each arm has a finite $p$-th moment for a fixed $p >2$, we can determine the sign of the bias of the sample mean at the stopping time $\Tau$ without assuming $\mathbb{E}N_k(\Tau) < \infty$ for all $k$ with $\mathbb{P}(\kappa = k) > 0$.

\subsection{Proof of Corollary~\ref{cor::lil'UCB} (The lil'UCB algorithm results in positive bias)} \label{Appen::subSec::lilUCB_proof}

Before presenting a formal proof of Corollary~\ref{cor::lil'UCB}, we first provide an intuitive explanation why any reasonable and efficient algorithm for the best-arm identification problem would result in positive bias. For any $k \in [K]$ and $i \in \mathbb{N}$, let $\D_\infty^*$ and $\D_\infty^{*'}$ be two MAB tabular representation that agree with each other except $X_{i,k}^* < X_{i,k}^{*'}$. Since we have a larger value from arm $k$ in the second scenario $\D_{\infty}^{*'}$, if $\kappa = k$ under the first scenario $\D_{\infty}^*$, any reasonable algorithm would also pick the arm $k$ under the more favorable scenario $\D_{\infty}^{*'}$. In this case, we know that $\kappa = k$ implies $\kappa' = k$. Also note that any efficient algorithm should be able to exploit the more favorable scenario $D_{\infty}^{*'}$ to declare arm $k$ as the best arm by using less samples from arm $k$. Therefore, we would have $N_k(\Tau) \geq N_k'(\Tau')$. In sum, we can expect that, from any reasonable and efficient algorithm, we would have $    \frac{\mathbbm{1}(\kappa = k)}{N_k(\Tau)} \leq  \frac{\mathbbm{1}(\kappa' = k)}{N_k'(\Tau')}$ which shows that the algorithm would be monotonically increasing and thus the sample mean of the chosen arm is positively biased. Below, we formally verify that this intuition works for the lil'UCB algorithm.

\begin{proof}[Proof of Corollary~\ref{cor::lil'UCB}]
For any given $i,k$, let $X_{i,k}^{*'}$ be an independent copy of $X_{i,k}^*$ and define $X_{\infty}^{*'}$ as a $\mathbb{N} \times K$ table which equals $X_{\infty}^{*}$ on all entries except the $(i,k)$-th entry, which contains $X_{i,k}^{*'}$. Let $\D_{\infty}^{*'} =  X_{\infty}^{*'} \cup \{W_{-1}, W_0, \dots\}$  denote the corresponding dataset, which only differs from $\D^*_\infty$ in one element. Let $(N_k(T), N_k'(T))$ denote the numbers of draws from arm $k$ up to time $T$. Let $(\Tau, \Tau')$ be the stopping times and $(\kappa, \kappa')$ be choosing functions as determined by the lil'UCB algorithm under $\D_\infty^*$ and $\D_\infty^{*'}$ respectively. 

Suppose $X_{i,k}^{*} \leq X_{i,k}^{*'}$. Proving that the lil'UCB algorithm is monotonically increasing (and hence results in positive bias) corresponds to showing that the following inequality holds:
\begin{equation} \label{eq::lil_ineq}
    \frac{\mathbbm{1}(\kappa = k)}{N_k(\Tau)} \leq  \frac{\mathbbm{1}(\kappa' = k)}{N_k'(\Tau')}.
\end{equation}

If $\kappa \neq k$, the inequality~\eqref{eq::lil_ineq} holds trivially. Therefore, for the rest of the proof, we assume $\kappa = k$ which also implies $\Tau  < \infty$. (If not, the lil'UCB algorithm is not stopped, and thus $\kappa \neq k$.)

First, we can check that the lil'UCB sampling is a special case of UCB-type sampling algorithms. Therefore, it is an optimistic sampling method which implies that for any \emph{fixed} $t > 0$, and \emph{fixed} arm $k$, we have $N_k(t)~\leq~N_k'(t)$. Since $\sum_{j \neq k} N_j(t) = t - N_k(t)$ for all $t$, we can rewrite the lil'UCB stopping rule as stopping the sampling whenever there exists a $k$ such that $N_k$, which is a non-decreasing function of $t$, crosses the strictly increasing linear boundary $\left\{(n, t) : n = \frac{1 + \lambda t}{1 + \lambda}\right\}$ for a fixed $\lambda >0$. Since $N_k(t) \leq N_k'(t)$ for all $t$, we know that $\Tau' \leq \Tau$. 

Since the linear boundary is increasing, we can check $N_k'(\Tau') \leq N_k(\Tau)$ if $\kappa' = k$. Therefore, to complete the proof, it is enough to show that $\kappa = k$ implies $\kappa' = k$. For the sake of deriving a contradiction,  assume $\kappa = k$ but $\kappa'  \neq k$. Then, there exists $j \neq k$ such that $\kappa' = j$.  By the definition of $\kappa'$, it is equivalent to $N_j'(\Tau') = \max_{l \in [K]} N_l'(\Tau')$. Hence, we have that
\begin{equation} \label{eq::lilUCB_lemma1}
N_j'(\Tau')  > N_k'(\Tau').    
\end{equation}
Similarly, we can show that
\begin{equation} \label{eq::lilUCB_lemma2}
N_j(\Tau)  < N_k(\Tau).    
\end{equation}

Since $\Tau'$ is the first time $t$ such that, for some $l$, $N_l'(t)$ has crossed the boundary, we know that $j$ is also the index of the arm which has crossed the boundary first time. Also, since the lil'UCB sampling satisfies the IIO condition, Lemma~\ref{lemma::cor_of_IIO} along with the fact that $N_k(t) \leq N_k'(t)$ for all $t$ implies that $N_j(t) \geq N_j'(t)$ for all $j \neq k$. From the two observations above, we have the following inequalities: 
\[
\frac{1 + \lambda \Tau'}{1+ \lambda} \leq N_j'(\Tau')  \leq N_j(\Tau'), 
\]
which implies that $t \mapsto N_j(t)$ is crossing the boundary at time $\Tau'$. By the definition of $\Tau$ and, by assumption, $\kappa = k$, we obtain that $\Tau \leq \Tau'$.

Similarly, from the fact that $N_k(t) \leq N_k'(t)$ for all $t$ along with the definition of $\Tau$, we have that
\[
\frac{1 + \lambda \Tau}{1+ \lambda} \leq N_k(\Tau)  \leq N_k'(\Tau),
\]
which implies that $t \mapsto N_k'(t)$ is crossing the boundary at time $\Tau$, and thus $\Tau' \leq \Tau$ since $\kappa' \neq k$ by assumption.

From the two observations above, we have $\Tau' = \Tau$. Finally, note that 
\[
 N_k'(\Tau') < N_j'(\Tau') \leq N_j(\Tau') = N_j(\Tau) < N_k(\Tau) \leq N_k'(\Tau) = N_k'(\Tau')
\]
where the first inequality comes from the inequality~\eqref{eq::lilUCB_lemma1}. The second inequality come from $N_j' \leq N_j$. The first equality comes from $\Tau' = \Tau$ and the third inequality comes from the inequality~\eqref{eq::lilUCB_lemma2}. The last inequality comes from $N_k \leq N_k'$ and the final equality comes from $\Tau = \Tau'$.

This is a contradiction, and, therefore, $\kappa = k$  implies that $\kappa' = k$. This proves that the lil'UCB algorithm is monotonically increasing and the chosen stopped sample mean from the lil'UCB algorithm is positively biased. 
\end{proof}

\subsection{Proof of Proposition~\ref{prop::bias_expression} (bias expression) via Lemma~\ref{lem:Wald-gen} (Wald's identity for MAB)}
\label{subSec::prop_bias_expression}
% \begin{proof}[Proof of Proposition~\ref{prop::bias_expression}]
   % By the Wald's equation, we have $\mu_k \mathbb{E}\left[N_k(\Tau)\right] = \mathbb{E}\left[S_k(\Tau)\right]$. 
By direct substitution, we first note that 
    \begin{align*}
        \mathbb{E}\left|S_k(\Tau) - \mu_k N_k(\Tau)\right|
        & = \mathbb{E} \left[\sum_{t=1}^\infty \mathbbm{1}\left(A_t = k\right) \left|Y_t - \mu_k\right| \mathbbm{1}\left(\Tau \geq t\right)\right]\\
        & = \sum_{t=1}^\infty \mathbb{E}  \left[ \mathbbm{1}\left(A_t = k\right) \left|Y_t - \mu_k\right| \mathbbm{1}\left(\Tau \geq t\right)\right]\\
        & = \sum_{t=1}^\infty \mathbb{E}  \left[ \mathbbm{1}\left(A_t = k\right)\mathbbm{1}\left(\Tau \geq t\right) \mathbb{E}\left[|Y_t - \mu_k| \mid \mathcal{F}_{t-1} \right]  \right]\\
        & = \sum_{t=1}^\infty \mathbb{E}  \left[ \mathbbm{1}\left(A_t = k\right)\mathbbm{1}\left(\Tau \geq t\right)  \int |x-\mu_k| \mathrm{d}P_k(x)  \right]\\
        & = \int |x-\mu_k| \mathrm{d}P_k(x) \mathbb{E}  \left[  \sum_{t=1}^\infty  \mathbbm{1}\left(A_t = k\right)\mathbbm{1}\left(\Tau \geq t\right)   \right]\\
        & = \int |x-\mu_k| \mathrm{d}P_k(x) \mathbb{E}  \left[N_k(\Tau)  \right] <\infty,
    \end{align*}
    where the second equality comes from the Tonelli's theorem and the third equality stems from the facts that $\mathbbm{1}(A_t =k)$ and $\mathbbm{1}(\Tau \geq t)$ are $\mathcal{F}_{t-1}$ measurable. The fourth equality comes from the fact that, on event $\mathbbm{1}(A_t =k)$, $Y_t \sim P_k$ and it is independent of the previous history. Finally, the finiteness of the last term comes from the assumption of the existence of the first moment of $k$-th arm and $\mathbb{E}[N_k(\Tau)] <\infty$. Therefore, by the dominated convergence theorem, we have 
    \begin{align*}
      \mathbb{E}\left[S_k(\Tau) - \mu_k N_k(\Tau)\right]
        & = \mathbb{E} \left[\sum_{t=1}^\infty \mathbbm{1}\left(A_t = k\right) \left[Y_t - \mu_k\right] \mathbbm{1}\left(\Tau \geq t\right)\right]\\
        & = \sum_{t=1}^\infty \mathbb{E}  \left[ \mathbbm{1}\left(A_t = k\right) \left[Y_t - \mu_k\right] \mathbbm{1}\left(\Tau \geq t\right)\right]\\
        & = \sum_{t=1}^\infty \mathbb{E}  \left[ \mathbbm{1}\left(A_t = k\right)\mathbbm{1}\left(\Tau \geq t\right) \mathbb{E}\left[Y_t - \mu_k \mid \mathcal{F}_{t-1} \right]  \right]\\
        & = 0,
    \end{align*}
    which implies $\mu_k \mathbb{E}\left[N_k(\Tau)\right] = \mathbb{E}\left[S_k(\Tau)\right]$, which proves the generalization of Wald's first identity.

    Since $\mathbb{E}\left[N_k(\Tau)\right] >0$, one can then express $\mu_k$ as
	\[
	\mu_k = \frac{\mathbb{E}\left[S_k(\Tau)\right] }{\mathbb{E}[N_k(\Tau)]}.
	\]
	By direct substitution, the bias of the sample mean can thus be expressed as
	\begin{align*}
	\mathbb{E}\left[\hat{\mu}_k(\Tau) - \mu_k\right] &= \mathbb{E}\left[\hat{\mu}_k(\Tau) \left(1 -  \frac{N_k(\Tau)}{\mathbb{E}[N_k(\Tau)]} \right) \right] \\
	&= \mathrm{Cov}\left(\hat{\mu}_k(\Tau), \left( 1 -  \frac{N_k(\Tau)}{\mathbb{E}[N_k(\Tau)]} \right)\right) \\
	&= -\frac{\mathrm{Cov}\left(\hat{\mu}_k(\Tau), N_k(\Tau)\right)}{\mathbb{E}[N_k(\Tau)]}.
	\end{align*}
This completes the proof of the proposition.
% \end{proof}

\section{Additional simulation results}

\subsection{More on negative bias due to optimistic sampling}\label{appen::subSec::simul_1}

We conduct a simulation study in which we have three unit-variance Gaussian arms with $\mu_1 = 1, \mu_2 = 2$ and $\mu_3 =3$. After sampling once from each arm, greedy, UCB and Thompson sampling are used to continue sampling until $T = 200$. We repeat the whole process from scratch  $10^4$ times for each algorithm to get an accurate estimate for the bias.

For UCB, we use $u_{t-1}(s, n) = \sqrt{\frac{2\log(1/\delta)}{n}}$ with $\delta = 0.1$. For Thompson sampling, we use independent standard Normal priors for simplicity. We repeat the whole process from scratch  $2000$ times for each algorithm to get an accurate estimate for the bias. 

Figure~\ref{fig::simul_1_greedy} shows the distribution of observed differences between sample means and the true mean for each arm under the greedy algorithm. Vertical lines correspond to biases. The example demonstrates that the sample mean is negatively biased under optimistic sampling rules. Similar results from UCB / Thompson sampling algorithms can be found in Section~\ref{subSec::simul_1}.

\begin{figure}[!ht]
\begin{center}
\includegraphics[scale=.7]{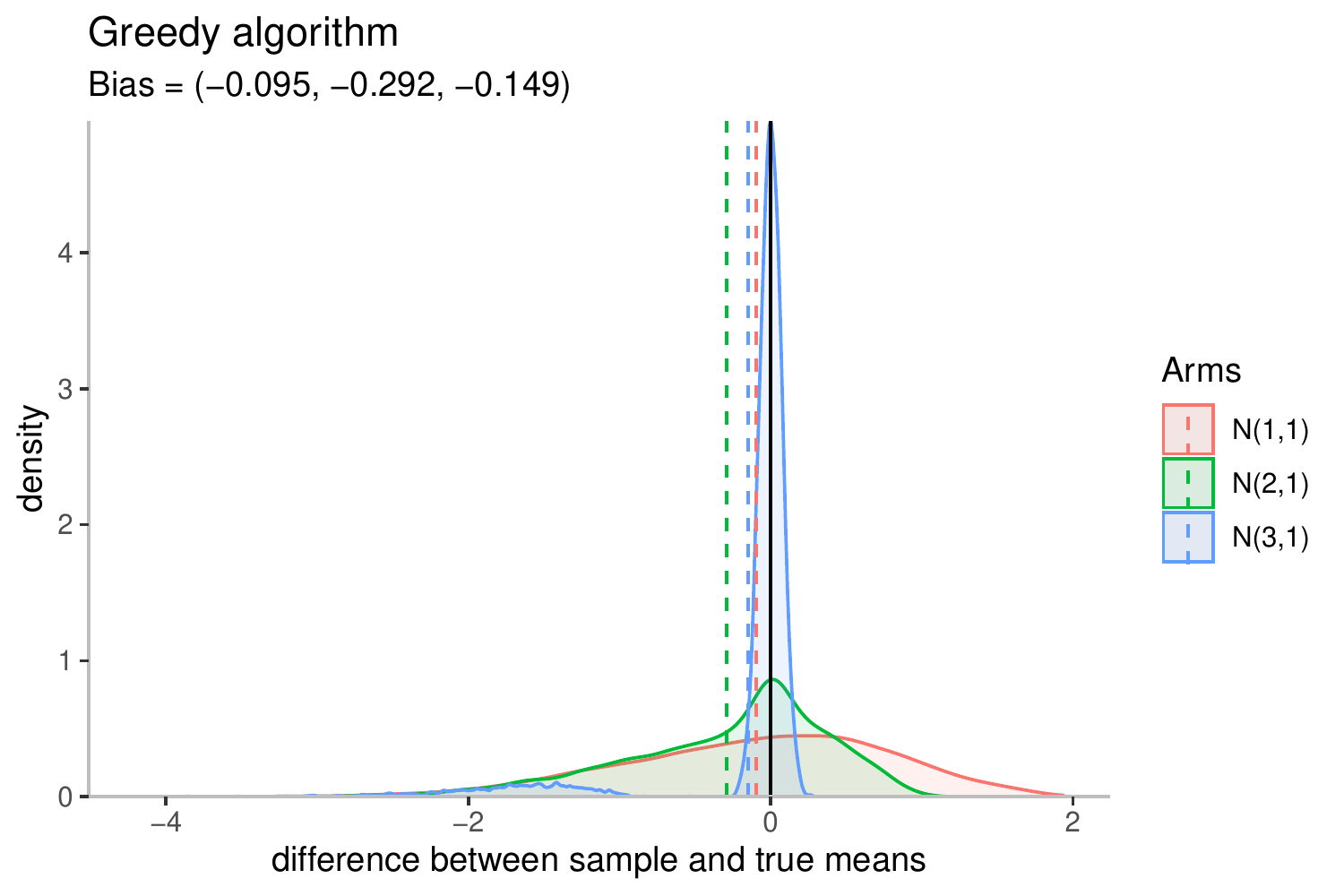}
\end{center}
	\caption{\em  Data is collected by the greedy algorithm from three unit-variance Gaussian arms with $\mu_1 =1, \mu_2 = 2$ and $\mu_3 = 3$. For all three arms, sample means are negatively biased.}
	\label{fig::simul_1_greedy}
\end{figure}

\subsection{Positive bias from optimistic choosing and stopping in identifying the largest mean}
\label{appen::subSec::simulation_3}

Suppose we have $K$ arms with mean $\mu_1, \dots, \mu_K$. As we were in Section~\ref{subSec::simulation_4}, we are interested not in each individual arm but in the arm with the largest mean. That is, our target of inference is $\mu_* := \max_{k \in [K]} \mu_k$.

Instead of using the lil'UCB algorithm, we can draw a sample from each arm in a cyclic order for each time $t$ and use 
a naive sequential procedure based on the following stopping time.
\begin{equation}
    \Tau_{M}^\delta := \inf\left\{t \in \{K, 2K,\dots, MK\}: \hat{\mu}_{(1)}(t) > \hat{\mu}_{(2)}(t) + \delta  \right\},
\end{equation}
where $M, \delta > 0$ are prespecified constants and $\hat{\mu}_{(k)}(t)$ is the $k$-th largest sample mean at time $t$. Once we stop sampling at time $\Tau_{M}^\delta $, we can estimate the largest mean by the largest stopped sample mean $\hat{\mu}_{(1)}\left(\Tau_{M}^\delta \right)$. 

The performance of this sequential procedure can vary based on underlying distribution of the arm and the choice of $\delta$ and $M$. However, we can check this optimistic choosing and stopping rules are jointly monotonic increasing and thus the largest stopped sample mean $\hat{\mu}_{(1)}\left(\Tau_{M}^\delta \right)$ is always positively based for any choice of $\delta$ and $M$.

To verify it with a simulation, we set $3$ unit-variance Gaussian arms with means $(\mu_1, \mu_2, \mu_3) = (g, 0, -g)$ for each gap parameter $g = 1, 3, 5$. We conduct $10^4$ trials of this sequential procedure with $M = 1000$ and $\delta = 0.7 \times g$. Figure~\ref{fig::simul_3} shows the distribution of observed differences between the chosen sample means and the corresponding true mean for each $\delta$. Vertical lines correspond to biases. The simulation study demonstrate that, in all configurations, the largest stopped sample mean $\hat{\mu}_{(1)}\left(\Tau_{M}^\delta \right)$ is always positively biased. Note, in contrast to the lil'UCB case in Section~\ref{subSec::simulation_4}, we have a larger bias for a smaller gap since the number of sample sizes are similar for each gaps due to the adaptive (and oracle) choice of the parameter $\delta$ but a smaller gap makes more difficult to identify largest mean correctly.

\begin{figure}[h!]
	\begin{center}
	\includegraphics[scale =  0.6]{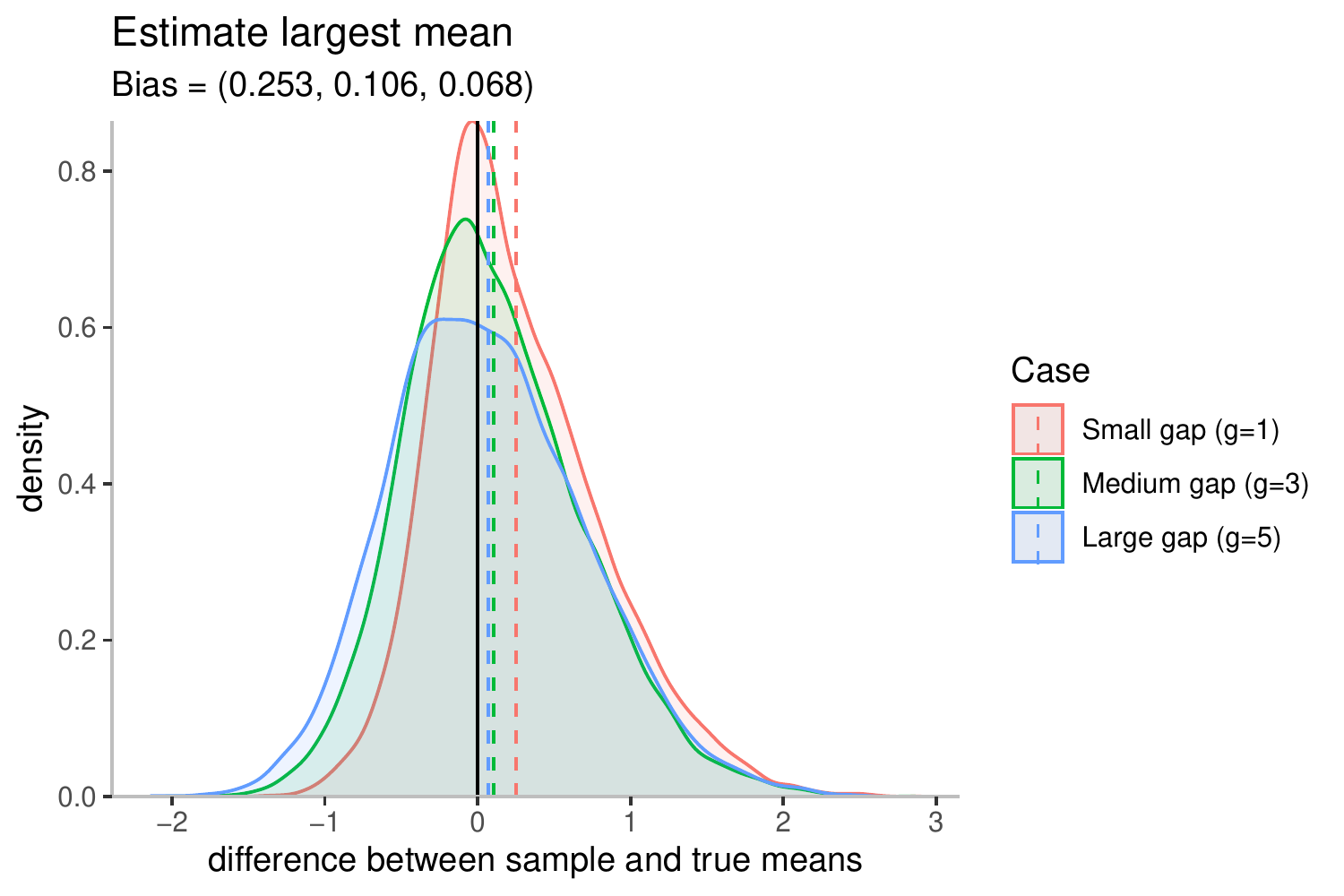}
	\end{center}
	\caption{\em  Data is collected by the sequential procedure described in Appendix~\ref{appen::subSec::simulation_3} under unit-variance Gaussian arms with $\mu_1 = g, \mu_2 = 0$ and $\mu_3 = -g$ for each gap parameter $g = \{1, 3, 5\}$. For each gap $g$, we set the parameter $\delta = 0.7 \times g$ and $M = 1000$. For all cases, chosen sample means are positively biased. }
	\label{fig::simul_3}
\end{figure}

\end{document}